\newtheorem{theorem}{Theorem}[section]
\newtheorem{lemma}[theorem]{Lemma}
\newtheorem{corollary}[theorem]{Corollary}
\newtheorem{definition}[theorem]{Definition}
\newtheorem{conjecture}[theorem]{Conjecture}
\theoremstyle{definition}
\theoremstyle{remark}
\numberwithin{equation}{section}
\begin{document}

\title{On systems of equations in free abelian groups}

\author{Anton Menshov}
\address{Institute of Mathematics and Information Technologies\\Omsk State Dostoevskii University}
\curraddr{}
\email{menshov.a.v@gmail.com}
\thanks{}

\keywords{free abelian groups, equations in groups, asymptotic density, Ehrhart quasipolynomials}

\date{}

\begin{abstract}
In this paper we study the asymptotic probability that a random system of equations in free abelian group $\mathbb{Z}^m$ of rank $m$ is solvable.
Denote $SAT(\mathbb{Z}^m, k, n)$ and $SAT_{\mathbb{Q}^m}(\mathbb{Z}^m, k, n)$ the sets of all systems of $n$ equations in $k$ variables in the group $\mathbb{Z}^m$ solvable in $\mathbb{Z}^m$ and $\mathbb{Q}^m$ respectively.
We show that asymptotic density of the set $SAT_{\mathbb{Q}^m}(\mathbb{Z}^m, k, n)$ is equal to $1$ for $n \leq k$, and is equal to $0$ for $n > k$.
For $n < k$ we give nontrivial estimates for upper and lower asymptotic densities of the set $SAT(\mathbb{Z}^m, k, n)$.
When $n > k$ the set $SAT(\mathbb{Z}^m, k, n)$ is negligible.
Also for $n \leq k$ we provide some connection between asymptotic density of the set $SAT(\mathbb{Z}^m, k, n)$ and sums over full rank matrices involving their greatest divisors.
\end{abstract}

\maketitle

\section{Introduction}
\label{sec:intro}
In finite group theory the idea of genericity can be traced to works of Erd\"{o}s and Turan \cite{ET} and Dixon \cite{D}.
Nowadays it is an area of active research.
In geometric group theory the generic approach is due to Gromov \cite{G1, G2, G3} and is associated with random walks on groups.

Recently, a host of papers appeared on generic properties of individual groups. We mention here, in particular, results on generic properties of one-relator groups \cite{KS1, KS2} and averaged Dehn functions \cite{KR, R}.

Gilman, Myasnikov and Roman'kov studied satisfiability of random equations in free abelian and finitely generated nilpotent groups \cite{GMR_nilpotent} and in free groups \cite{GMR_free}.
Probability of a homogeneous equation in a surface group to have solutions is studied in \cite{ACV}.

In this paper we study satisfiability of random systems of equations in free abelian groups of finite rank and extend corresponding results obtained in \cite{GMR_nilpotent} for equations.

Denote $SAT(\mathbb{Z}^m, k, n)$ and $SAT_{\mathbb{Q}^m}(\mathbb{Z}^m, k, n)$ the sets of all systems of $n$ equations in $k$ variables in the group $\mathbb{Z}^m$ solvable in $\mathbb{Z}^m$ and $\mathbb{Q}^m$ respectively.
We study asymptotic density of the sets above with respect to the natural stratification of the group $\mathbb{Z}^m$ with balls corresponding to the uniform norm $\| \cdot  \|_{\infty}$ of Euclidean space $\mathbb{R}^m$.
In the sequel we treat $\mathbb{Z}^m$ and its subgroups as integer lattices in $\mathbb{R}^m$.

In Section \ref{sec:rational-systems}, using the asymptotics for the number of integral matrices of fixed rank \cite{K}, we show in theorem \ref{TH_SAT_IN_Qm_AD} that the set $SAT_{\mathbb{Q}^m}(\mathbb{Z}^m, k, n)$ is generic if $n \leq k$, and negligible if $n > k$.

In Section \ref{sec:ehrhart} we recall some basic notions concerning lattice points counting in rational polytopes and extend  inequality \cite[Theorem 6]{BM} derived for coefficients of Ehrhart polynomials to Ehrhart quasipolynomials (see theorem \ref{TH_EHR_COEFF_BOUND}).
Using this inequality, in theorem \ref{TH_GCD_SUMS} we establish connection between asymptotic density of the set $SAT(\mathbb{Z}^m, k, n)$ and sums of inverse greatest divisors over full rank matrices.
Based on this result, we make a conjecture \ref{CON_1} concerning asymptotic density of the set $SAT(\mathbb{Z}^m, n, n)$.
In theorem \ref{TH_MAIN_BOUNDS} we give nontrivial estimates for upper and lower asymptotic densities of the set $SAT(\mathbb{Z}^m, k, n)$.
Namely, we show that for $n < k$ this densities lie in the interval from $\left( \prod_{j = k - n + 1}^{k} \zeta(j) \right)^{-1}$ to $\left( \frac{\zeta(k+m)}{\zeta(k)} \right)^n$, where $\zeta(s)$ is Riemann zeta-function.

\section{Preliminaries}
\label{sec:prelim}

\subsection{Asymptotic density}
\label{subsec:density}
A {\em stratification} of a countable set $T$ is a  sequence $\{T_r\}_{r \in \mathbb{N}}$ of non-empty finite subsets $T_r$ whose union is $T$. 
Stratifications are often specified by length functions. A {\em length function} on $T$ is a map $l: T \to \mathbb{N}$ from $T$ to the nonnegative integers $\mathbb{N}$ such that the inverse image of every integer is finite. The corresponding spherical and ball stratifications are formed by {\em spheres} $S_r = \{x \in T \mid l(x) = r\}$ and {\em balls} $B_r = \{x \in T \mid l(x) \leq r\}$.

\begin{definition}
The asymptotic density of $M \subset T$ with respect to a stratification $\{T_r\}$ is defined to be
\[
\rho(M) = \lim_{r\to\infty} \rho_r(M), \ \ \ \mbox{where} \ \ \  \rho_r(M) = \frac{|M \cap T_r|} {|T_r|}
\]
 when the limit exists. Otherwise, we use the limits 
\[
\bar{\rho}(M) = \limsup_{r\to\infty} \rho_r(M), \ \ \ \underline{\rho}(M) = \liminf_{r\to\infty} \rho_r(M),
\]
and call them upper and lower asymptotic densities respectively.
\end{definition}

$M$ is said to be {\em generic} in $T$ with respect to the stratification $\{T_r\}_{r \in \mathbb{N}}$ if $\rho(M) = 1$ and {\em negligible} if $\rho(M) = 0$. A property of elements of $T$ is generic if it holds on a generic subset.

Asymptotic density is one of the tools for measuring sets in infinite groups (see \cite{BMS} for details).

Let $\mathbb{Z}^m$ be a free abelian group of rank $m$.
We identify $\mathbb{Z}^m$ with the standard integer lattice in Euclidean space $\mathbb{R}^m$.
We assume that $\mathbb{R}^m$ is equipped with the uniform norm defined for $v = (v_1, \dots, v_m)$ by the formula
\[
\|v\|_{\infty} = \max_{i} \{ |v_i| \}.
\]
This norm induces the lenght function $\|\cdot\|_{\infty}: \mathbb{Z}^m \to \mathbb{N}$ with balls
\[
B_r^m = \{ v \in \mathbb{Z}^m \mid \|v\|_{\infty} \leq r \}.
\]
Further we will compute asymptotic density of some subsets in free abelian groups with respect to the given ball stratification.

\subsection{Equations in groups}
\label{subsec:equations}
An equation $u=1$ in $k$ variables over a group $G$ is an expression of the form
\[
    g_{0}x_{i_{1}}^{m_{1}}g_{1} \dots x_{i_{n}}^{m_{n}}g_{n} = 1,
\]
where each coefficient $g_j$ is a group element, each exponent $m_j$ is an integer, and each $x_{i_j}$ is taken from an alphabet of variables, $X = \{x_1, \dots , x_k\}$.
In this case, the free product $G_{X} = F(X) * G$ is the space of all equations in variables $X$ and coefficients in $G$.
A solution of $u=1$ in $G$ is an assignment $x_j\to h_j \in G$ such that $g_0h_{i_1}^{m_1}g_1\ldots h_{i_n}^{m_n}g_n = 1$.
Denote by $SAT(G,K)$ the set of all equations from $G_X$ which have a solution in $G$ ({\em satisfiable} equations).

Since we study equations in free abelian groups, we will use additive notation throughout.
In line with this we will write equation as
\begin{equation}\label{F_EQ_IN_Zm}
    \gamma_{1}\mathbf{x_1} + \dots + \gamma_{k}\mathbf{x_k} = \mathbf{b},
\end{equation}
where $\gamma_j \in \mathbb{Z}$ and $\mathbf{b} \in \mathbb{Z}^m$ are fixed, and $\mathbf{x_j} = (x_{j1}, \dots, x_{jm})$ are variables.

System
\begin{equation}
\left\{
    \begin{alignedat}{4}
        \gamma_{11}\mathbf{x_1} &+ \cdots &&+ \gamma_{1k}\mathbf{x_k} &&= \mathbf{b_1}, \\
        \;\vdots  &            &&\quad\vdots \\
        \gamma_{n1}\mathbf{x_1} &+ \cdots &&+ \gamma_{nk}\mathbf{x_k} &&= \mathbf{b_n},
    \end{alignedat}
\right.
\end{equation}
of $n$ equations of the form (\ref{F_EQ_IN_Zm}) will be written as
\begin{equation}\label{F_SYS_IN_Zm}
    AX = B,
\end{equation}
where
\[
A = (\gamma_{ij}) \in \mathbb{Z}^{nk}, \quad
B = \left(
        \begin{array}{c}
                \mathbf{b_1} \\
                \vdots \\
                \mathbf{b_n}
        \end{array}
    \right) \in \mathbb{Z}^{nm}, \quad \text{and} \quad
X = \left(
        \begin{array}{c}
                \mathbf{x_1} \\
                \vdots \\
                \mathbf{x_k}
        \end{array}
    \right).
\]

It is natural to consider two equations as essentially the same if one can be transformed into the other by applying identities of the variety of abelian groups.
So the natural space of equations in variables $X = \{x_1, \dots, x_k\}$ over a free abelian group $\mathbb{Z}^m$ is the direct product $ A(X) \times \mathbb{Z}^m \simeq \mathbb{Z}^{k+m}$ of a free abelian group $A(X)$ with basis $X$ and group $\mathbb{Z}^m$.
By $G_{X,n}$ we denote the space of all systems of $n$ equations from $G_X$ and $\mathbb{Z}^{n(k+m)}$ is a natural choice for it.
We denote $SAT(G,k,n)$ the set of all systems from $G_{X,n}$ solvable in $G$.
Systems from $\mathbb{Z}^m_{X,n}$ solvable in $\mathbb{Q}^m$ will be denoted by $SAT_{\mathbb{Q}^m}(\mathbb{Z}^m, k, n)$.

Observe, that in (\ref{F_SYS_IN_Zm}) different coordinates of variables are independent from each other, so the following obvious  lemma holds.

\begin{lemma}\label{L_SYS_IN_Zm_SOLUTION}
A system $AX = B$ of the form~(\ref{F_SYS_IN_Zm}) is solvable in $\mathbb{Z}^{m} (\mathbb{Q}^{m})$ if and only if systems $Ax = B_{i}$ are solvable over $\mathbb{Z} (\mathbb{Q})$ for any $i = 1, \dots, m$, where $B_i$ is the $i$-th column of the matrix $B$.

If $X_i = (x_{1i}, \dots, x_{ki})^T$ is a solution of $Ax = B_{i}$, then $X = (X_1, \dots, X_m) $ is a solution of $AX = B$.
\end{lemma}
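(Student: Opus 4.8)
The plan is to unpack the vector system~(\ref{F_SYS_IN_Zm}) coordinate by coordinate and observe that the $m$ coordinates never interact. Write $\mathbf{x_j} = (x_{j1}, \dots, x_{jm})$ for $j = 1, \dots, k$ and $\mathbf{b_t} = (b_{t1}, \dots, b_{tm})$ for $t = 1, \dots, n$. The $t$-th equation of the system is $\gamma_{t1}\mathbf{x_1} + \dots + \gamma_{tk}\mathbf{x_k} = \mathbf{b_t}$, and taking its $i$-th coordinate ($i = 1, \dots, m$) gives the scalar equation $\gamma_{t1}x_{1i} + \dots + \gamma_{tk}x_{ki} = b_{ti}$. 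Thus the whole system $AX = B$ is equivalent to the family of $mn$ scalar equations indexed by the pairs $(t,i)$. Now fix $i$ and let $t$ run over $1, \dots, n$: the corresponding $n$ scalar equations are exactly $A X_i = B_i$, where $X_i = (x_{1i}, \dots, x_{ki})^T$ collects the $i$-th coordinates of the variables and $B_i = (b_{1i}, \dots, b_{ni})^T$ is the $i$-th column of $B$. Equivalently, regarding the variable tuple as the $k \times m$ matrix whose rows are $\mathbf{x_1}, \dots, \mathbf{x_k}$, this is just the remark that two $n \times m$ integer matrices $AX$ and $B$ are equal if and only if their $i$-th columns agree for every $i$.

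It follows that an assignment $X = (X_1, \dots, X_m)$ of the variables satisfies $AX = B$ if and only if $A X_i = B_i$ holds for every $i = 1, \dots, m$; reading all entries in $\mathbb{Z}$ yields the assertion about solvability in $\mathbb{Z}^m$, and reading them in $\mathbb{Q}$ yields the one about $\mathbb{Q}^m$. For the forward implication one merely reads off the columns of a given solution. For the converse one is handed solutions $X_i \in \mathbb{Z}^k$ (respectively $\mathbb{Q}^k$) of the individual systems $A x = B_i$; assembling them into the single tuple $X = (X_1, \dots, X_m)$ and applying the column-wise criterion shows that $X$ solves $AX = B$, which also establishes the last sentence of the lemma.

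There is no genuine difficulty here --- the statement is, as noted, obvious. The only point requiring a little care is the bookkeeping between the block-vector form of the system, where $X = (\mathbf{x_1}, \dots, \mathbf{x_k})^T$ is a tuple of $m$-dimensional vectors, and its matrix form, where the relevant splitting is into the $m$ columns $X_1, \dots, X_m$; once that dictionary is in place the equivalence is immediate from the definition of matrix multiplication.
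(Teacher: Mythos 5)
Your proof is correct and takes exactly the route the paper implies: the paper states the lemma without a formal proof, remarking only that the coordinates of the variables in~(\ref{F_SYS_IN_Zm}) do not interact, and your argument simply makes that observation precise by reading the vector system column by column.
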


\section{Systems solvable in $\mathbb{Q}^m$}
\label{sec:rational-systems}
In this section we will compute asymptotic density of the set $SAT_{\mathbb{Q}^m}(\mathbb{Z}^m,k,n)$ of all systems of $n$ equations in $k$ variables in $\mathbb{Z}^m$ solvable in $\mathbb{Q}^m$.

Consider a system of linear diophantine equations
$Ax = b$ with $A \in \mathbb{Z}^{nk}, b \in \mathbb{Z}^{n}$.
According to Kronecker-Capelli theorem the system above has solution over $\mathbb{Q}$ if and only if the rank of its coefficient matrix $A$ is equal to the rank of augmented matrix $(A|b)$.

We state here the main result of \cite{K} where the asymptotics for the number of integral matrices of fixed rank is derived.
Denote
\begin{align*}
    V_{n, k, s}(\mathbb{Z}) &= \{ A \in \mathbb{Z}^{nk} \mid rank(A) =s \}, \\
    N(r; n, k, s) &= \left| \{ A \in V_{n, k, s}(\mathbb{Z}) \mid \|A\|_2 < r \} \right|,
\end{align*}
where $\|A\|_{2} = \sqrt{(\sum_{i,j}a_{ij}^{2})}$.

\begin{theorem}[Katznelson \cite{K}]\label{TH_RANK}
For $k \geq n > s \geq 1$ and as $r$ tends to infinity:
\begin{enumerate}
    \item[(1)] for $n<k$, \ \
        $N(r; n, k, s) = \alpha(n, k, s)r^{ks} + O(r^{ks - 1})$.
    \item[(2)] for $n=k$, \ \
        $N(r; k, k, s) = \beta(k, s)r^{ks}\log r + O(r^{ks})$.
\end{enumerate}
\end{theorem}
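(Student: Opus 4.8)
The plan is to prove the theorem by counting rank-$s$ matrices according to the rational subspace spanned by their columns, thereby reducing the problem to a sum, over primitive rank-$s$ sublattices of $\mathbb{Z}^{n}$, of a lattice-point count in a Euclidean ball, and then to evaluate that sum using the asymptotics for the number of rational subspaces of bounded height. Concretely, I would regard an $n\times k$ integer matrix $A$ as the list of its columns $(a^{(1)},\dots,a^{(k)})$ with $a^{(j)}\in\mathbb{Z}^{n}$, so that $\|A\|_{2}<r$ becomes $(a^{(1)},\dots,a^{(k)})\in B$, where $B\subset\mathbb{R}^{nk}$ is the Euclidean ball of radius $r$. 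If $\mathrm{rank}(A)=s$ then $L:=\bigl(\mathrm{span}_{\mathbb{Q}}\{a^{(j)}\}\bigr)\cap\mathbb{Z}^{n}$ is a primitive rank-$s$ sublattice of $\mathbb{Z}^{n}$ uniquely determined by $A$, its columns lie in $L$ and span $L\otimes\mathbb{Q}$, and conversely every such configuration produces a matrix of rank $s$. Hence
\[
  N(r;n,k,s)=\sum_{L}\#\bigl\{(a^{(1)},\dots,a^{(k)})\in L^{k}\cap B:\ \mathrm{span}_{\mathbb{Q}}\{a^{(j)}\}=L\otimes\mathbb{Q}\bigr\},
\]
the sum running over all primitive rank-$s$ sublattices $L\subseteq\mathbb{Z}^{n}$.

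Next I would evaluate the inner count for a fixed $L$, with covolume $d(L)$ in its $s$-dimensional span and successive minima $\mu_{1}\le\cdots\le\mu_{s}$. The set $L^{k}$ is a lattice of rank $sk$ in $\mathbb{R}^{nk}$ of covolume $d(L)^{k}$, and $L^{k}\cap B$ is the set of its points inside an $sk$-dimensional ball of radius $r$, so the standard lattice-point estimate in convex bodies gives
\[
  \#\bigl(L^{k}\cap B\bigr)=\frac{\omega_{sk}}{d(L)^{k}}\,r^{sk}+O\!\Bigl(\textstyle\sum_{j=0}^{sk-1}r^{j}\big/\bigl(\lambda_{1}(L^{k})\cdots\lambda_{j}(L^{k})\bigr)\Bigr),
\]
where $\omega_{d}$ is the volume of the unit ball in $\mathbb{R}^{d}$. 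Configurations spanning a proper subspace of $L\otimes\mathbb{Q}$ lie in finitely many proper subspaces and contribute a lower-order amount, and the count vanishes unless $\mu_{s}<r$, in which range $d(L)\ll r^{s}$. Summing the main term over $L$ and using Schmidt's asymptotic formula $\#\{L\subseteq\mathbb{Z}^{n}\ \text{primitive of rank }s:\ d(L)\le T\}=c_{n,s}T^{n}+O(T^{n-\delta})$ (here $d(L)$ is the height of $L\otimes\mathbb{Q}$), partial summation yields
\[
  \omega_{sk}\,r^{sk}\sum_{d(L)\le r^{s}}\frac{1}{d(L)^{k}}=
  \begin{cases}\ \alpha(n,k,s)\,r^{sk}+O\!\bigl(r^{sn}\bigr), & n<k,\\[1.5ex] \ \beta(k,s)\,r^{sk}\log r+O\!\bigl(r^{sk}\bigr), & n=k,\end{cases}
\]
because $\int^{T}t^{\,n-k-1}\,dt$ converges as $T\to\infty$ when $k>n$ (so $\sum_{L}d(L)^{-k}$ is a finite constant, absorbed into $\alpha$) and has order $\log T$ when $k=n$; note $O(r^{sn})\subseteq O(r^{sk-1})$ since $s(k-n)\ge1$. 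This produces the asserted asymptotics, with $\alpha$ and $\beta$ the constants so obtained.

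The step I expect to be the main obstacle is making the total error $O(r^{sk-1})$ (respectively $O(r^{sk})$). This requires that the lattice-point bound above be genuinely uniform over all $L$ — which is why its successive-minima form is used — and, more seriously, control over the aggregate contribution of ``skew'' sublattices $L$, those for which $\mu_{s}(L)$ is large compared with $d(L)^{1/s}$: for such $L$ the per-lattice error in counting $L^{k}\cap B$ can dominate the term $\omega_{sk}r^{sk}/d(L)^{k}$, so one must bound the number of skew $L$ of a given covolume (a ``few lattices with a disproportionately short sublattice'' estimate). Combining this with the error term in Schmidt's count and, when $n<k$, with the tail $\sum_{d(L)>r^{s}}d(L)^{-k}=O\bigl(r^{-s(k-n)}\bigr)$, one sums over the dyadic ranges $2^{j}\le d(L)<2^{j+1}$ and checks that the resulting geometric series closes in the correct regime. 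That bookkeeping — chiefly the bound on skew sublattices — is the crux; the remaining ingredients are standard geometry of numbers and the known subspace count.
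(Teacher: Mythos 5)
The paper does not prove Theorem~\ref{TH_RANK}; it is quoted verbatim from Katznelson \cite{K}, so there is no in-paper argument to compare against. Your reconstruction does follow the strategy of Katznelson's actual proof: stratify rank-$s$ matrices by the primitive rank-$s$ sublattice $L\subseteq\mathbb{Z}^n$ saturating their column span, count lattice points of $L^k$ in the $sk$-dimensional ball with the successive-minima form of the error term, and then evaluate $\sum_{d(L)\le r^s} d(L)^{-k}$ via Schmidt's asymptotic formula for primitive sublattices of bounded determinant, getting a convergent sum (hence $\alpha r^{sk}$) when $k>n$ and a $\log r$ when $k=n$. The bookkeeping of exponents in your partial-summation step is correct, including the observation that the tail $O(r^{sn})$ lands inside $O(r^{sk-1})$ because $s(k-n)\ge 1$.

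Two genuine gaps remain, one of which you flag and one of which you gloss over. The one you flag is real and is the technical heart of Katznelson's paper: summing the per-lattice errors $\sum_{j<sk} r^j/(\lambda_1\cdots\lambda_j)$ over all $L$ with $d(L)\le r^s$ is not obviously $O(r^{sk-1})$ (resp.\ $O(r^{sk})$), because for a lattice whose largest successive minimum $\mu_s(L)$ is close to $r$ the error competes with, or dominates, the main term $\omega_{sk} r^{sk}/d(L)^k$. Closing this requires a quantitative count of primitive sublattices with prescribed \emph{shape} (a bound on how many $L$ of given covolume can have $\mu_s/\mu_1$ large), combined with a dyadic decomposition in both $d(L)$ and the minima. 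Without that lemma, your argument is a correct heuristic but not a proof.

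The gap you gloss over is the parenthetical ``configurations spanning a proper subspace of $L\otimes\mathbb{Q}$ lie in finitely many proper subspaces and contribute a lower-order amount.'' This is false as stated: there are infinitely many rational proper subspaces of $L\otimes\mathbb{Q}$. What you actually want is either (a) to drop the exact-span condition, count $\#(L^k\cap B)$ directly, and then argue that the resulting over-count — each rank-$s'$ matrix with $s'<s$ gets counted once for every primitive rank-$s$ overlattice of its column span that has $d(L)\le r^s$ — is absorbed into the error by induction on $s$ together with a count of the number of such overlattices; or (b) perform a M\"obius-type inversion over the poset of saturated sublattices. Either route ties back into the same skew-lattice estimate, because the number of lower-rank configurations inside a given $L$ and the number of elongated overlattices of a given sublattice both depend on the shapes involved. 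So this is not an independent hand-wave; it is a second face of the obstacle you already named, and it needs the same missing lemma.
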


Observe, that the growth rate of $N(r; n, k, s)$ doesn't change if we replace the norm $\|\cdot\|_2$ by $\|\cdot\|_{\infty}$.
Denote
\[
    N(r; n, k, s)_{\infty} = \left| \{ A \in V_{n, k, s}(\mathbb{Z}) \mid \|A\|_{\infty} < r \} \right|.
\]
It is easy to see that
\[
    \dfrac{1}{\sqrt{nk}} \|A\|_{2} \leq \|A\|_{\infty} \leq \|A\|_{2}
\]
and
\[
    N(r; n, k, s) \leq N(r; n, k, s)_{\infty} \leq N(r\sqrt{nk}; n, k, s).
\]
Therefore for $s = 1, \dots, n-1$
\begin{equation}\label{F_RANK_DENSITY}
    \rho(V_{n, k, s}(\mathbb{Z})) =
    \lim_{r \to \infty} \dfrac{N(r; n, k, s)_{\infty}}{(2r+1)^{nk}} = 0
\end{equation}
which implies that $\rho(V_{n, k, n}(\mathbb{Z})) = 1$, i.e., asymptotically almost all $n \times k$ matrices have full rank.

\begin{theorem}\label{TH_SAT_IN_Qm_AD}
The set $SAT_{\mathbb{Q}^m}(\mathbb{Z}^m, k, n)$ is generic if $n \leq k$ and negligible if $n > k$.
\end{theorem}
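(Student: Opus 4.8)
The plan is to treat the two regimes $n\le k$ and $n>k$ separately, in each case reducing --- via Lemma~\ref{L_SYS_IN_Zm_SOLUTION} together with the Kronecker--Capelli criterion --- to counting integer matrices $A$ according to $\operatorname{rank}(A)$ and estimating how many right-hand-side columns $B_i$ can lie in $W_A$, the $\mathbb{Q}$-linear span of the columns of $A$ (viewed inside $\mathbb{Q}^n$).

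\emph{Case $n\le k$ (genericity).} First I would observe that if $\operatorname{rank}(A)=n$ then for each column $B_i$ the augmented matrix $(A\mid B_i)$ has only $n$ rows and contains $A$, so $\operatorname{rank}(A\mid B_i)=n=\operatorname{rank}(A)$; by Kronecker--Capelli every $Ax=B_i$ is solvable over $\mathbb{Q}$, and by Lemma~\ref{L_SYS_IN_Zm_SOLUTION} the system $AX=B$ is then solvable over $\mathbb{Q}^m$, \emph{whatever} $B$ is. Hence $SAT_{\mathbb{Q}^m}(\mathbb{Z}^m,k,n)\supseteq\{(A,B):\operatorname{rank}(A)=n\}$. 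Since the ball $B_r$ of the stratification of $\mathbb{Z}^{n(k+m)}$ factors as the product of the radius-$r$ balls in $\mathbb{Z}^{nk}$ and $\mathbb{Z}^{nm}$, the number of such pairs inside $B_r$ equals $\bigl|\{A:\|A\|_\infty\le r,\ \operatorname{rank}(A)=n\}\bigr|\cdot(2r+1)^{nm}$, so the relevant ratio is $\bigl|\{A:\|A\|_\infty\le r,\ \operatorname{rank}(A)=n\}\bigr|/(2r+1)^{nk}$. This tends to $1$ by~(\ref{F_RANK_DENSITY}), since its complementary event is the union of the finitely many negligible strata $\operatorname{rank}(A)=0,1,\dots,n-1$. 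As a density cannot exceed $1$, we conclude $\rho\bigl(SAT_{\mathbb{Q}^m}(\mathbb{Z}^m,k,n)\bigr)=1$.

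\emph{Case $n>k$ (negligibility).} The crucial ingredient is the elementary lattice-point bound: for \emph{any} $A\in\mathbb{Z}^{nk}$ the set $\mathbb{Z}^n\cap W_A$ meets the cube $\{-r,\dots,r\}^n$ in at most $(2r+1)^k$ points. Indeed $\dim_{\mathbb{Q}}W_A=\operatorname{rank}(A)\le k$, so projection onto a suitable set of $\operatorname{rank}(A)$ coordinates is injective on $W_A$ --- hence on $\mathbb{Z}^n\cap W_A$ --- and sends the cube into $\{-r,\dots,r\}^{\operatorname{rank}(A)}$. Now by Kronecker--Capelli and Lemma~\ref{L_SYS_IN_Zm_SOLUTION}, a pair $(A,B)$ can belong to $SAT_{\mathbb{Q}^m}(\mathbb{Z}^m,k,n)$ only if $B_i\in W_A$ for every $i=1,\dots,m$, so for each fixed $A$ there are at most $\bigl((2r+1)^k\bigr)^m$ admissible $B$ with $\|B\|_\infty\le r$. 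Summing the trivial bound $(2r+1)^{nk}$ over all eligible $A$ gives
\[
    \bigl|SAT_{\mathbb{Q}^m}(\mathbb{Z}^m,k,n)\cap B_r\bigr|\ \le\ (2r+1)^{nk+km},
\]
and division by $|B_r|=(2r+1)^{n(k+m)}$ yields $\rho_r\bigl(SAT_{\mathbb{Q}^m}(\mathbb{Z}^m,k,n)\bigr)\le(2r+1)^{(k-n)m}\to 0$ as $r\to\infty$, using $n>k$ and $m\ge1$.

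I do not expect a genuine obstacle here: once the lattice-point bound above is recorded, the $n>k$ case is a pure dimension count, and the $n\le k$ case is immediate from~(\ref{F_RANK_DENSITY}). The only care needed is bookkeeping --- correctly carrying the $(2r+1)^{nm}$ factor coming from the $B$-coordinates of $\mathbb{Z}^{n(k+m)}$, keeping track of strict versus non-strict norm inequalities when invoking~(\ref{F_RANK_DENSITY}), and noting that only finitely many lower-rank strata of $A$ are discarded so that their individual negligibility suffices.
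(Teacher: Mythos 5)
Your proof of the genericity case $n\le k$ is essentially the paper's own: you both take the set of pairs $(A\mid B)$ with $\operatorname{rank}(A)=n$, observe it is contained in $SAT_{\mathbb{Q}^m}(\mathbb{Z}^m,k,n)$, and reduce via the product structure of the ball $B_r^{n(k+m)}$ to the density of full-rank matrices in $\mathbb{Z}^{nk}$, which is $1$ by~(\ref{F_RANK_DENSITY}).

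For the negligibility case $n>k$ you take a genuinely different route, and a more elementary one. The paper projects $(A\mid B)\mapsto(A\mid B_1)$ and argues that solvability forces $\operatorname{rank}(A\mid B_1)<k+1$, so $SAT_{\mathbb{Q}^m}(\mathbb{Z}^m,k,n)$ lands inside the rank-deficient stratum of $\mathbb{Z}^{n(k+1)}$, which is negligible by another application of Katznelson's theorem via~(\ref{F_RANK_DENSITY}). You instead fix $A$, note that $W_A=\operatorname{span}_{\mathbb{Q}}(A)\subseteq\mathbb{Q}^n$ has dimension $\operatorname{rank}(A)\le k$, and use the observation that a rational subspace of dimension $d$ admits a coordinate projection that is injective on it, so $W_A\cap\mathbb{Z}^n$ meets the cube of radius $r$ in at most $(2r+1)^d\le(2r+1)^k$ points. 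Summing the resulting bound $\bigl((2r+1)^k\bigr)^m$ over the trivial count $(2r+1)^{nk}$ of matrices $A$ and dividing by $(2r+1)^{n(k+m)}$ gives $\rho_r\le(2r+1)^{-(n-k)m}\to0$. This is correct, uniform in $A$, and dispenses with any appeal to the nontrivial asymptotic count of fixed-rank integral matrices for this half of the theorem; the paper's version is shorter to state given that~(\ref{F_RANK_DENSITY}) is already in hand, but yours buys self-containedness and an explicit power-law rate of decay. One small remark: you could have phrased the $n>k$ case in the paper's style too (project to $(A\mid B_1)$ and invoke density of full-rank $n\times(k+1)$ matrices), but your direct count is a legitimate and cleaner alternative.
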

\begin{proof}
Denote $B_r = B_{r}^{n(k+m)}$.
Consider the set
\[
    S_1 = \{ (A|B) \in \mathbb{Z}^{n(k+m)} \mid rank(A)=n \}
\]
of all systems $AX=B$ of the form (\ref{F_SYS_IN_Zm}) for $n \leq k$.
All systems from $S_1$ are solvable in $\mathbb{Q}^m$, hence the following inclusion holds
$
    S_1 \subset SAT_{\mathbb{Q}^m}(\mathbb{Z}^m, k, n).
$
Consider the projection $\pi_{1}: \mathbb{Z}^{n(k+m)} \to \mathbb{Z}^{nk}$, defined by $\pi_{1}(A|B) = A$.
Observe, that
\[
    \pi_{1}(S_{1} \cap B_r) =
    V_{n, k, n}(\mathbb{Z}) \cap \pi_{1}(B_r)
\]
and each preimage contains $(2r+1)^{nm}$ elements. Hence
\[
    \dfrac{ | \pi_{1}( S_{1} \cap B_r) | }{(2r+1)^{nk}} =
        \rho_{r}(V_{n, k, n}(\mathbb{Z}))
\]
and
\[
    \rho_{r}(S_{1}) =
    \dfrac{ | S_{1} \cap B_r| }{ |B_r| } =
    (2r+1)^{nm}\dfrac{ | \pi_{1}(S_{1} \cap B_r) | }{(2r+1)^{n(k+m)}} =
    \rho_{r}(V_{n, k, n}(\mathbb{Z})).
\]
Therefore $\rho(S_{1}) = \rho(V_{n, k, n}(\mathbb{Z})) = 1$,
which implies that for $n \leq k$
\[
    \rho(SAT_{\mathbb{Q}^m}(\mathbb{Z}^m, k, n)) = 1.
\]

Next consider the set
\[
    S_2 = \{ (A|B) \in \mathbb{Z}^{n(k+m)} \mid rank(A|B_1)<k+1 \}
\]
of all systems $AX=B$ of the form (\ref{F_SYS_IN_Zm}) for $n > k$, where $B_1$ is the first column of the matrix $B$.
According to lemma \ref{L_SYS_IN_Zm_SOLUTION} solvability of the system $AX = B$ in $\mathbb{Q}^m$ implies solvability of the system $Ax = B_1$ over $\mathbb{Q}$, so $rank(A|B_1) < k+1$ since $n > k$.
Thus the following inclusion holds
$
    SAT_{\mathbb{Q}^m}(\mathbb{Z}^m, k, n) \subset S_2.
$
Consider the projection $\pi_{2}: \mathbb{Z}^{n(k+m)} \to \mathbb{Z}^{n(k+1)}$, defined by $\pi_{2}(A|B) = (A|B_{1})$.
Observe, that
\[
    \pi_{2}(S_{2} \cap B_r) =
    \pi_{2}(B_r) \backslash 
    (V_{k+1, n, k+1}(\mathbb{Z}) \cap \pi_{2}(B_r))
\]
and each preimage contains $(2r+1)^{n(m-1)}$ elements.
Hence
\[
    \dfrac{ | \pi_{2}( S_{2} \cap B_r) | }{(2r+1)^{n(k+1)}} =
        1 - \rho_{r}(V_{k+1, n, k+1}(\mathbb{Z}))
\]
and
\[
    \rho_{r}(S_{2}) =
    \dfrac{|S_{2} \cap B_r|}{|B_r|} =
    (2r+1)^{n(m-1)}\dfrac{|\pi_{2}(S_{2} \cap B_r)|}{(2r+1)^{n(k+m)}} =
    1 - \rho_{r}(V_{k+1, n, k+1}(\mathbb{Z})).
\]
Therefore $\rho(S_{2}) = 1 - \rho(V_{k+1, n, k+1}(\mathbb{Z})) = 0$, which implies that for $n>k$
\[
    \rho(SAT_{\mathbb{Q}^m}(\mathbb{Z}^m, k, n)) = 0.
\]
\end{proof}

\begin{corollary}
The set $SAT(\mathbb{Z}^m, k, n)$ is negligible if $n>k$.
\end{corollary}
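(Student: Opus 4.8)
The plan is to derive this immediately from Theorem \ref{TH_SAT_IN_Qm_AD} via monotonicity of asymptotic density. First I would observe that any system $AX = B$ of the form (\ref{F_SYS_IN_Zm}) that is solvable in $\mathbb{Z}^m$ is in particular solvable in $\mathbb{Q}^m$, since $\mathbb{Z}^m \subset \mathbb{Q}^m$ and an integer solution is a fortiori a rational solution. This gives the inclusion
\[
    SAT(\mathbb{Z}^m, k, n) \subseteq SAT_{\mathbb{Q}^m}(\mathbb{Z}^m, k, n),
\]
where both sets are regarded as subsets of the same ambient space $\mathbb{Z}^{n(k+m)}$ equipped with the same ball stratification $\{B_r^{n(k+m)}\}$ used throughout Section \ref{sec:rational-systems}.

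Next I would invoke monotonicity: if $M \subseteq M'$ then $\rho_r(M) \leq \rho_r(M')$ for every $r$, hence $\bar{\rho}(M) \leq \bar{\rho}(M')$. Applying this with $M = SAT(\mathbb{Z}^m, k, n)$ and $M' = SAT_{\mathbb{Q}^m}(\mathbb{Z}^m, k, n)$, and using that Theorem \ref{TH_SAT_IN_Qm_AD} gives $\rho(SAT_{\mathbb{Q}^m}(\mathbb{Z}^m, k, n)) = 0$ for $n > k$, we obtain
\[
    0 \leq \underline{\rho}(SAT(\mathbb{Z}^m, k, n)) \leq \bar{\rho}(SAT(\mathbb{Z}^m, k, n)) \leq \bar{\rho}(SAT_{\mathbb{Q}^m}(\mathbb{Z}^m, k, n)) = 0.
\]
Thus the limit $\rho(SAT(\mathbb{Z}^m, k, n))$ exists and equals $0$, which is precisely the assertion. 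There is essentially no obstacle to overcome here; the only point requiring a moment's care is that the two sets sit in the same stratified space, so the inequalities $\rho_r(M) \leq \rho_r(M')$ hold verbatim.
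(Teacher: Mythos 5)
Your proof is correct and is exactly the argument the paper gives: the inclusion $SAT(\mathbb{Z}^m, k, n) \subseteq SAT_{\mathbb{Q}^m}(\mathbb{Z}^m, k, n)$ together with Theorem \ref{TH_SAT_IN_Qm_AD} forces the density to be $0$ for $n > k$. You simply spell out the monotonicity step that the paper leaves implicit.
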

\begin{proof}
Since $SAT(\mathbb{Z}^m, k, n) \subset SAT_{\mathbb{Q}^m}(\mathbb{Z}^m, k, n)$, if follows that for $n>k$
$
    \rho(SAT(\mathbb{Z}^m, k, n)) = 0.
$
\end{proof}

\section{Lattice points in rational polytopes}
\label{sec:ehrhart}
Counting lattice points in the integral dilates of a subset of Euclidean space $\mathbb{R}^d$ is a well known problem.
For rational polytopes this problem has been studied in the $1960$s by the French mathematician Eug\`{e}ne Ehrhart.
These results may be useful when computing asymptotic densities in free abelian groups (see, for example, \cite{M}).
Detailed survey of further results can be found in \cite{BR} (see also \cite{STANLEY}).
We recall some basic notions first.
A {\em convex polytope} in $\mathbb{R}^d$ is a finite intersection of closed half-spaces, i.e.,
\[
\mathcal{P} = \{ \mathbf{x} \in \mathbb{R}^d \mid A\mathbf{x} \leq b \}, \ \ where \ \ A \in \mathbb{R}^{md}, \; b \in \mathbb{R}^m.
\]
This definition is called the {\em hyperplane description} of $\mathcal{P}$.
Also any bounded convex polytope in $\mathbb{R}^d$ has the {\em vertex description} and could be presented as the convex hull of finitely many points in $\mathbb{R}^d$.
This vertex description of a polytope is equivalent to the hyperplane description.

The {\em dimension} of a polytope $\mathcal{P}$ is the dimension of the affine space
\[
\mathrm{span}~\mathcal{P} = \{ \mathbf{x} + \lambda (\mathbf{y}-\mathbf{x}) \mid \mathbf{x}, \mathbf{y} \in \mathcal{P}, \lambda \in \mathbb{R} \}
\]
spanned by $\mathcal{P}$.
If $\mathcal{P}$ has dimension $d$, we use the notation $\dim \mathcal{P} = d$ and call $\mathcal{P}$ a $d$-polytope.

A bounded convex polytope $\mathcal{P}$ is called {\em integral} if all of its vertices have integral coordinates, and $\mathcal{P}$ is called {\em rational} if all of its vertices have rational coordinates.
We will call the least common multiple of the denominators of the coordinates of the vertices of $\mathcal{P}$ the {\em denominator} of $\mathcal{P}$.

For $t \in \mathbb{Z}^{+}$ denote $t \mathcal{P} = \{ t\mathbf{x} \mid \mathbf{x} \in \mathcal{P} \}$ the $t^{th}$ dilate of $\mathcal{P}$.
We denote the {\em lattice-point enumerator} for the $t^{th}$ dilates of $\mathcal{P}$ by
\[
L_{\mathcal{P}}(t) = |t \mathcal{P} \cap \mathbb{Z}^d|.
\]
We define the {\em Ehrhart series} of $\mathcal{P}$ as the generating function of $L_{\mathcal{P}}(t)$
\[
    Ehr_{\mathcal{P}}(z) = \sum_{t \geq 0} L_{\mathcal{P}}(t)z^t.
\]
Here we assume that $L_{\mathcal{P}}(0) = 1$.

By $\binom{n}{k}$ we denote the \emph{binomial coefficient}, defined through
\begin{equation}\label{F_BINOM_COEFF}
    \binom{n}{k} = \dfrac{n(n-1) \dots (n-k+1)}{k!}
\end{equation}
for $n \in \mathbb{C}, \; k \in \mathbb{Z}^+$.

The following theorem states some known results concerning lattice points in integral polytopes \cite[theorems 3.8, 3.12 and lemmas 3.13, 3.14]{BR}.

\begin{theorem}\label{TH_EHR_LATTICE}
If $\mathcal{P}$ is an integral convex $d$-polytope, then
\begin{enumerate}
    \item[(1)]
        $Ehr_{\mathcal{P}}(z) = \dfrac{h_dz^d + \dots + h_1z + 1}{(1-z)^{d+1}}$ with $h_i \in \mathbb{N}$,
    \item[(2)]
        $L_{\mathcal{P}}(t) = \binom{t+d}{d} + h_1 \binom{t+d-1}{d} + \dots + h_{d-1} \binom{t+1}{d} + h_d \binom{t}{d}$ is a polynomial in $t$ of degree $d$.
\end{enumerate}
$L_{\mathcal{P}}(t)$ is called the Ehrhart poynomial of $\mathcal{P}$.
\end{theorem}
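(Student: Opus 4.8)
The plan is to deduce both parts from the theory of the cone over $\mathcal{P}$, reading off $Ehr_{\mathcal{P}}(z)$ as the integer-point transform of a rational cone graded by one coordinate. First I would embed $\mathcal{P}$ at height one, setting $\widehat{\mathcal{P}}=\{(1,\mathbf{x})\mid\mathbf{x}\in\mathcal{P}\}\subset\mathbb{R}^{d+1}$, and let $\mathrm{cone}(\mathcal{P})$ be the set of nonnegative real combinations of points of $\widehat{\mathcal{P}}$. Grading $\mathbb{R}^{d+1}$ by the first coordinate, the height-$t$ slice of $\mathrm{cone}(\mathcal{P})$ meets $\mathbb{Z}^{d+1}$ in a copy of $t\mathcal{P}\cap\mathbb{Z}^d$, so $Ehr_{\mathcal{P}}(z)=\sum_{\mathbf{w}\in\mathrm{cone}(\mathcal{P})\cap\mathbb{Z}^{d+1}}z^{w_1}$, the integer-point transform of $\mathrm{cone}(\mathcal{P})$ specialized at the grading variable $z$.

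Next I would reduce to a single simplex. Triangulate $\mathcal{P}$ into integral $d$-simplices using only its vertices, and make the triangulation half-open so that $\mathcal{P}$ is a disjoint union of partially relatively open simplices; then $\mathrm{cone}(\mathcal{P})$ is a disjoint union of the corresponding half-open simplicial cones, and integer-point transforms are additive over such disjoint unions. For an integral $d$-simplex $\Delta$ with vertices $v_0,\dots,v_d$ the cone is generated by $g_i=(1,v_i)$, each of height $1$. Every lattice point of the closed cone is uniquely $\mathbf{w}=\sum_i\lambda_ig_i$ with $\lambda_i\ge 0$; writing $\lambda_i=\lfloor\lambda_i\rfloor+\mu_i$ with $0\le\mu_i<1$ gives a unique decomposition $\mathbf{w}=\mathbf{p}+\sum_i\lfloor\lambda_i\rfloor g_i$ with $\mathbf{p}$ in the finite set $\Pi=\bigl\{\sum_i\mu_ig_i\bigm|0\le\mu_i<1\bigr\}\cap\mathbb{Z}^{d+1}$. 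Hence the transform of this cone is $\bigl(\sum_{\mathbf{p}\in\Pi}z^{p_1}\bigr)/(1-z)^{d+1}$, whose numerator lies in $\mathbb{N}[z]$ and has degree at most $d$ because $0\le p_1\le d$ for $\mathbf{p}\in\Pi$. Summing over the half-open cells gives $Ehr_{\mathcal{P}}(z)=g(z)/(1-z)^{d+1}$ with $g\in\mathbb{N}[z]$ of degree at most $d$; comparing constant terms, $[z^0]Ehr_{\mathcal{P}}(z)=L_{\mathcal{P}}(0)=1$ forces $g(0)=1$, which is part (1) with $h_i$ the coefficients of $g$.

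For part (2) I would expand $1/(1-z)^{d+1}=\sum_{t\ge0}\binom{t+d}{d}z^t$ and multiply by $h_dz^d+\dots+h_1z+1$, which yields exactly $L_{\mathcal{P}}(t)=\sum_{i=0}^d h_i\binom{t+d-i}{d}$ with $h_0=1$. By the definition (\ref{F_BINOM_COEFF}), each $\binom{t+d-i}{d}$ is a polynomial in $t$ of degree $d$, so $L_{\mathcal{P}}(t)$ is a polynomial of degree at most $d$; its leading coefficient is $\frac{1}{d!}\sum_{i=0}^d h_i$, which is positive (it equals the $d$-dimensional volume of $\mathcal{P}$, nonzero since $\dim\mathcal{P}=d$), so the degree is exactly $d$.

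I expect the main obstacle to be the bookkeeping for general, non-simplicial $\mathcal{P}$: producing a half-open lattice triangulation and checking that the induced decomposition of $\mathrm{cone}(\mathcal{P})$ into half-open simplicial cones is a genuine partition, which is what legitimizes additivity of integer-point transforms and preserves the nonnegativity of the $h_i$ (a naive inclusion–exclusion over faces of the triangulation would not make nonnegativity visible). Once that is set up, everything reduces to the standard half-open-parallelepiped computation for a single simplicial cone carried out above.
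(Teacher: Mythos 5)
The paper does not prove this theorem; it cites it verbatim from Beck and Robins \cite{BR} (their Theorems 3.8 and 3.12 and Lemmas 3.13 and 3.14), so there is no in-paper argument to compare against. Your proof is precisely the route that cited reference takes---coning over $\mathcal{P}$ at height one, reading $Ehr_{\mathcal{P}}(z)$ as the integer-point transform of $\mathrm{cone}(\mathcal{P})$ graded by height, triangulating $\mathcal{P}$ into integral $d$-simplices on its vertex set, passing to a half-open triangulation so the simplicial cones partition $\mathrm{cone}(\mathcal{P})$, and computing each cone's transform from its fundamental parallelepiped---and it is correct. The one point worth making explicit, since you flag it as a potential obstacle, is that in any half-open triangulation each simplex retains at least one facet (a point cannot lie on the outside of all $d+1$ facet hyperplanes of a simplex), so in the parallelepiped $\Pi$ not all of the $\mu_i$ can equal $1$; this is what forces $p_1 = \sum_i \mu_i < d+1$, hence $p_1 \le d$, and keeps the numerator degree at most $d$ while also ensuring the origin is counted exactly once, giving $g(0)=1$.
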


Some coefficients of $L_{\mathcal{P}}(t) = c_d t^d + \dots + c_1 t + c_0$ have geometric interpretation.
For example, the leading coefficient $c_d$ is equal to $d$-dimensional volume of $\mathcal{P}$  \cite[corollary 3.20]{BR}, which implies that
\begin{equation}\label{F_VOLUME_POLYNOM}
    \mathrm{vol}(\mathcal{P}) = \dfrac{1}{d!} (h_d + \dots + h_1 + 1).
\end{equation}
It is also known that $c_0 = 1$ \cite[corollary 3.15]{BR}.

We recall that a {\em quasipolynomial} $Q$ is an expression of the form $Q(t) = c_n(t)t^n + \dots + c_1(t)t + c_0(t)$, where $c_0, \dots, c_n$ are periodic functions in $t$ and $c_n$ is not the zero function.
The {\em degree} of $Q$ is $n$, and the least common period of $c_0, \dots, c_n$ is the {\em period} of $Q$.
Alternatively, for a quasipolynomial $Q$, there exist a positive integer $k$ and polynomials $p_0, p_1, \dots, p_{k-1}$ such that $Q(t) = p_i(t)$ if $t \equiv i \pmod k$.
The minimal such $k$ is the period of $Q$.

Results of the theorem \ref{TH_EHR_LATTICE} can be extended to rational polytopes \cite[theorem 3.23, ex. 3.25]{BR}.

\begin{theorem}\label{TH_EHR_RATIONAL}
If $\mathcal{P}$ is a rational convex $d$-polytope with the denominator $p$, then
\begin{enumerate}
    \item[(1)]
        $Ehr_{\mathcal{P}}(z) = \dfrac{\sum_{i=0}^{d(p+1)-1} h_iz^i}{(1-z^p)^{d+1}}$ with $h_i \in \mathbb{N}$,
    \item[(2)]
        $L_{\mathcal{P}}(t)$ is a quasipolynomial in $t$ of degree $d$ and its period divides $p$.
\end{enumerate}
$L_{\mathcal{P}}(t)$ is called the Ehrhart quasipolynomial of $\mathcal{P}$.
\end{theorem}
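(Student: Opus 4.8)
The plan is to transport the cone-and-generating-function argument behind Theorem~\ref{TH_EHR_LATTICE} to the rational setting, carrying the denominator $p$ through every step. First I would embed $\mathcal{P}$ at height one in $\mathbb{R}^{d+1}$ and form the cone $\mathcal{C} = \mathrm{cone}(\mathcal{P}) = \{ \lambda(\mathbf{x},1) \mid \mathbf{x} \in \mathcal{P},\ \lambda \geq 0 \}$. Slicing $\mathcal{C}$ at integer height $t$ returns $t\mathcal{P}$, so the integer-point transform $\sigma_{\mathcal{C}}(z_1,\dots,z_{d+1}) = \sum_{\mathbf{m} \in \mathcal{C} \cap \mathbb{Z}^{d+1}} \mathbf{z}^{\mathbf{m}}$ specializes, under $z_1 = \dots = z_d = 1$ and $z_{d+1} = z$, to $Ehr_{\mathcal{P}}(z)$. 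Hence it suffices to exhibit $\sigma_{\mathcal{C}}$ as a rational function whose shape survives this specialization.

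Next I would triangulate $\mathcal{P}$ using only its own vertices; coning over the simplices cuts $\mathcal{C}$ into simplicial cones, each spanned by rays through points $(\mathbf{v},1)$ with $\mathbf{v}$ a vertex of $\mathcal{P}$. Since $p\mathbf{v} \in \mathbb{Z}^d$, such a ray carries the integral generator $\mathbf{w} = p(\mathbf{v},1)$, whose last coordinate equals $p$. For a simplicial cone $K = \sum_{i=1}^{d+1} \mathbb{R}_{\geq 0}\mathbf{w}_i$ one has the standard identity $\sigma_K(\mathbf{z}) = \sigma_{\Pi_K}(\mathbf{z})\big/\prod_{i=1}^{d+1}(1 - \mathbf{z}^{\mathbf{w}_i})$, where $\Pi_K = \{ \sum_i \mu_i \mathbf{w}_i \mid 0 \leq \mu_i < 1 \}$ is the half-open fundamental parallelepiped and $\sigma_{\Pi_K}$ is a finite sum of monomials each with coefficient $+1$. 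To reassemble the pieces into $\sigma_{\mathcal{C}}$ without sign cancellation I would use a half-open decomposition of $\mathcal{C}$ --- a generic perturbation that assigns every lattice point of $\mathcal{C}$ to exactly one closed simplicial cone of the triangulation --- so that $\sigma_{\mathcal{C}}$ is literally the sum of the resulting half-open transforms, each of the displayed shape. Specializing $z_1 = \dots = z_d = 1$, $z_{d+1} = z$, every $\mathbf{z}^{\mathbf{w}_i}$ becomes $z^p$, so over the common denominator $(1-z^p)^{d+1}$ the numerator is $\sum_i h_i z^i$ with each $h_i \in \mathbb{N}$; moreover a lattice point of any $\Pi_K$ is $\sum_i \mu_i \mathbf{w}_i$ with $d+1$ summands of height $p$ and each $\mu_i < 1$, so its height stays below $(d+1)p$, which bounds the degree of the numerator as required for part~(1).

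For part~(2) I would read off $L_{\mathcal{P}}(t)$ by partial fractions. Since the numerator has degree less than $p(d+1) = \deg (1-z^p)^{d+1}$, the function $\sum_i h_i z^i/(1-z^p)^{d+1}$ is proper, all of its poles sit at $p$-th roots of unity with order at most $d+1$, and for every $t \geq 0$ the coefficient $[z^t]$ is given exactly by the partial-fraction formula: a pole of order $j$ at $\omega$ contributes a polynomial of degree $j-1$ in $t$ times $\omega^t$. Summing over the at most $p$ poles and sorting $t$ by residue modulo $p$ turns each $\omega^t$ into a constant on its residue class, so on each class $L_{\mathcal{P}}(t)$ coincides with a polynomial in $t$ of degree at most $d$; that is, $L_{\mathcal{P}}$ is a quasipolynomial of degree $\leq d$ whose period divides $p$. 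That the degree equals $d$ --- equivalently that the pole at $z=1$ has full order $d+1$ --- follows from the Riemann-sum estimate $L_{\mathcal{P}}(t) = \mathrm{vol}(t\mathcal{P}) + O(t^{d-1}) = \mathrm{vol}(\mathcal{P})\,t^d + O(t^{d-1})$ together with $\mathrm{vol}(\mathcal{P}) > 0$.

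I expect the only genuine obstacle to be the nonnegativity of the $h_i$ in part~(1): a naive inclusion--exclusion over a triangulation of $\mathcal{C}$ produces cancelling signs, and it is precisely Stanley's nonnegativity phenomenon that forces the half-open refinement exhibiting the numerator as an honest sum of $+1$ monomials; keeping this refinement compatible with the asserted degree bound is the delicate bookkeeping. Everything else is a faithful transcription of the integral case, Theorem~\ref{TH_EHR_LATTICE}, and the volume normalization there.
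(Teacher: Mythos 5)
The paper does not actually prove this theorem; it recalls it from Beck and Robins \cite{BR} (their Theorem 3.23 and Exercise 3.25). Your argument — coning over $\mathcal{P}$ at height one, triangulating using vertices so that every ray has integral generator $p(\mathbf{v},1)$ of height $p$, passing to a half-open decomposition so that the lattice-point transform of $\mathcal{C}$ is an honest sum of the simplicial transforms with numerator coefficients in $\mathbb{N}$, and then reading the quasipolynomial off the partial-fraction expansion at $p$-th roots of unity with the degree pinned down by the volume — is precisely the Beck--Robins proof, and it is correct as a sketch.

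There is, however, one point you should not gloss over. Your lattice points of $\Pi_K$ have height strictly below $(d+1)p$, so you bound the numerator degree by $p(d+1)-1$. You then assert this is ``as required for part~(1),'' but part~(1) as printed bounds the degree by $d(p+1)-1$. These agree only when $d=p$; for $p>d$ your bound $dp+p-1$ does not imply the printed $dp+d-1$. In fact the printed bound is a typo and your bound is the correct one. A one-dimensional counterexample: $\mathcal{P}=[1/3,2/3]$, so $d=1$, $p=3$, $L_{\mathcal{P}}(t)=1,0,1,2,1,2,3,\dots$, and
\[
Ehr_{\mathcal{P}}(z)=\frac{1+z^{2}+z^{4}}{(1-z^{3})^{2}},
\]
with numerator degree $4$, exceeding the printed $d(p+1)-1=3$ but within your $p(d+1)-1=5$. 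The paper's own Lemma~\ref{L_EHR_QUASIPOLYNOM_FORMULA} indexes the numerator coefficients as $h_{ip+j}$ for $0\le i\le d$, $0\le j\le p-1$, i.e.\ up to index $p(d+1)-1$, which also confirms that $p(d+1)$ is what was intended. So rather than claiming your bound matches part~(1) as stated, you should note that the statement should read $\sum_{i=0}^{p(d+1)-1} h_i z^i$, and that your construction delivers exactly that.

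Everything else in your outline is sound: the cone specialization $z_1=\dots=z_d=1$, $z_{d+1}=z$ does give $Ehr_{\mathcal{P}}(z)$; the half-open refinement is exactly what rescues nonnegativity from the inclusion--exclusion over a triangulation; the partial-fraction argument correctly produces a quasipolynomial of degree $\le d$ and period dividing $p$; and the degree is exactly $d$ because the $z=1$ pole has full order $d+1$, which your Riemann-sum comparison with $\mathrm{vol}(\mathcal{P})t^d$ establishes.
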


We note that there is an algorithm by Alexander Barvinok to compute Ehrhart quasipolynomials. Barvinok's algorithm is polynomial in fixed dimension, it has been implemented in the software package $\mathsf{LattE}$ \cite{L}.

We will need an explicit formula for Ehrhart quasipolynomials, similar to one specified in theorem \ref{TH_EHR_LATTICE} for Ehrhart polynomials.

\begin{lemma}\label{L_EHR_QUASIPOLYNOM_FORMULA}
In notations of the theorem \ref{TH_EHR_RATIONAL}
\[
    L_{\mathcal{P}}(t) = f_j(t) \ \ \text{if} \ t \equiv j \pmod p
\]
where
\[
    f_j(t) = h_j \binom{t+d}{d} + h_{p+j} \binom{t+d-1}{d} + \dots + h_{dp+j} \binom{t}{d}.
\]
\end{lemma}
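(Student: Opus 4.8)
The plan is to extract $L_{\mathcal{P}}(t)$ directly from the Ehrhart series provided by Theorem~\ref{TH_EHR_RATIONAL}(1) as the coefficient of $z^t$. The only analytic input is the negative binomial expansion $(1-w)^{-(d+1)} = \sum_{\ell \geq 0}\binom{\ell+d}{d}w^{\ell}$; substituting $w = z^p$ gives
\[
\frac{1}{(1-z^p)^{d+1}} \;=\; \sum_{\ell \geq 0}\binom{\ell+d}{d}\, z^{p\ell},
\]
which is the rational-denominator analogue of the identity behind the closed form for $L_{\mathcal{P}}$ in the integral case, Theorem~\ref{TH_EHR_LATTICE}(2).

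Next I would multiply this series by the numerator $\sum_{i=0}^{d(p+1)-1} h_i z^i$ of $Ehr_{\mathcal{P}}(z)$ and read off the coefficient of $z^t$, which yields $L_{\mathcal{P}}(t) = \sum_{\ell \geq 0} h_{t-p\ell}\binom{\ell+d}{d}$, where we set $h_i = 0$ for indices $i$ outside the admissible range. To put this into the stated form, fix a residue class $t \equiv j \pmod p$ with $0 \leq j \leq p-1$: then only indices $i = t - p\ell$ with $i \equiv j \pmod p$ contribute, i.e. $i \in \{j, p+j, 2p+j, \dots\}$, and the finiteness of the numerator truncates this list after finitely many terms. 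Since the stated $f_j$ involves exactly $h_j, h_{p+j}, \dots, h_{dp+j}$, the relevant range of terms is $s = 0, 1, \dots, d$, and as $i = ps+j$ runs through these values the companion index $\ell$ runs through the $d+1$ consecutive integers $\tfrac{t-j}{p}, \tfrac{t-j}{p}-1, \dots, \tfrac{t-j}{p}-d$. Pairing each coefficient $h_{ps+j}$ with the corresponding binomial factor $\binom{\ell+d}{d}$ then assembles exactly the polynomial $f_j$ displayed in the statement, and hence $L_{\mathcal{P}}(t) = f_j(t)$ on that class.

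I do not anticipate a real obstacle here: the argument is a single generating-function identity followed by a reindexing. The one place needing care is the truncation of the summation ranges — one must check that passing to the $d+1$ terms $s = 0, 1, \dots, d$ appearing in $f_j$ neither omits a nonzero contribution nor adds one, using that $h_i = 0$ for $i$ outside the window of Theorem~\ref{TH_EHR_RATIONAL}(1) and that $\binom{\ell+d}{d} = 0$ for $\ell \in \{-1, -2, \dots, -d\}$ under the convention~(\ref{F_BINOM_COEFF}). Once the indices are aligned, the quasipolynomial structure of $L_{\mathcal{P}}$ — one polynomial $f_j$ per residue class modulo $p$ — emerges automatically, in agreement with Theorem~\ref{TH_EHR_RATIONAL}(2).
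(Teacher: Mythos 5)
Your proposal takes essentially the same route as the paper: expand $(1-z^p)^{-(d+1)}$ as $\sum_{\ell\ge 0}\binom{\ell+d}{d}z^{p\ell}$, multiply by the numerator $\sum h_i z^i$, and collect coefficients by residue class modulo $p$, with the truncation handled by the vanishing of $\binom{\ell+d}{d}$ for $\ell\in\{-1,\dots,-d\}$ (the paper phrases this as ``in all infinite sums we can start the index $t$ with $0$''). One caveat: your reindexing actually yields $L_{\mathcal{P}}(pq+j)=f_j(q)$, i.e.\ $L_{\mathcal{P}}(t)=f_j\bigl(\tfrac{t-j}{p}\bigr)$ on the class $t\equiv j$, not literally $L_{\mathcal{P}}(t)=f_j(t)$; the paper's own final display $Ehr_{\mathcal{P}}(z)=\sum_j\sum_{t\ge 0}f_j(t)z^{pt+j}$ exhibits exactly the same reparametrization, so this is a pre-existing notational looseness in the source rather than a gap you introduced, but your sentence ``assembles exactly the polynomial $f_j$ displayed in the statement'' should not be read as asserting that the variable in $f_j$ is $t$ itself.
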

\begin{proof}
\begin{align*}
    Ehr_{\mathcal{P}}(z) &=
        \dfrac{\sum\limits_{i=0}^{d} \sum\limits_{j=0}^{p-1} h_{ip+j} z^{ip+j} }{(1-z^p)^{d+1}} \\ &=
        \left( \sum_{i=0}^{d} \sum_{j=0}^{p-1} h_{ip+j} z^{ip+j} \right) \sum_{t \geq 0} \binom{t+d}{d} z^{tp} \\ &=
        \sum_{i=0}^{d} \sum_{j=0}^{p-1} h_{ip+j} \sum_{t \geq 0} \binom{t+d}{d} z^{ip + j + tp} \\ &=
        \sum_{i=0}^{d} \sum_{j=0}^{p-1} h_{ip+j} \sum_{t \geq i} \binom{t+d-i}{d} z^{pt + j}.
\end{align*}
In all infinite sums we can start the index $t$ with $0$.
Hence
\begin{align*}
    Ehr_{\mathcal{P}}(z) &=
        \sum_{i=0}^{d} \sum_{j=0}^{p-1} h_{ip+j} \sum_{t \geq 0} \binom{t+d-i}{d} z^{pt + j} \\ &=
        \sum_{j=0}^{p-1} \sum_{t \geq 0} \left( \sum_{i=0}^{d} h_{ip+j}  \binom{t+d-i}{d} \right) z^{pt + j} \\ &=
        \sum_{j=0}^{p-1} \sum_{t \geq 0} f_j(t) z^{pt + j}.
\end{align*}
\end{proof}

For Ehrhart quasipolynomials $L_{\mathcal{P}}(t) = c_d(t)t^d + \dots + c_1(t)t + c_0(t)$ the leading coefficient $c_d(t) $ is equal to $d$-dimensional volume of $\mathcal{P}$ and $c_0(t) = 1$.
So $c_d$ and $c_0$ are constants.
This implies the formular similar to (\ref{F_VOLUME_POLYNOM})
\begin{equation}\label{F_VOLUME_QUASIPOLYNOM}
    \mathrm{vol}(\mathcal{P}) = \dfrac{1}{d!} (h_j + h_{p+j} + \dots + h_{dp+j}),
\end{equation}
for $j = 0, \dots, p-1$.

The coefficients of Ehrhart polynomials are very special, see \cite{EHR_COEFF_ROOTS} for example. 
We will be interested in the following inequality.

\begin{theorem}[Betke, McMullen \cite{BM}]
If $\mathcal{P}$ is an integral convex $d$-polytope with Ehrhart polynomial $L_{\mathcal{P}}(t) = c_d t^d + \dots + c_1 t + 1$, then
\[
    c_r \leq (-1)^{d-r} s(d, r) c_d + (-1)^{d-r-1} \dfrac{s(d, r+1)}{(d-1)!} \ \ \ \text{for} \ r = 1, \dots, d-1,
\]
where $s(i,j)$ denote the Stirling numbers of the first kind.
\end{theorem}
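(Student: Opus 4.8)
The plan is to work from the $h$-vector expansion of the Ehrhart polynomial supplied by Theorem \ref{TH_EHR_LATTICE}(2). Writing $h_0 = 1$, we have $L_{\mathcal{P}}(t) = \sum_{i=0}^{d} h_i \binom{t+d-i}{d}$ with every $h_i \geq 0$, so if $a_i$ denotes the coefficient of $t^r$ in $\binom{t+d-i}{d}$, then $c_r = \sum_{i=0}^{d} h_i a_i$. Since the leading coefficient of $L_{\mathcal{P}}$ equals $\mathrm{vol}(\mathcal{P}) = c_d$, the volume formula (\ref{F_VOLUME_POLYNOM}) gives $\sum_{i=0}^{d} h_i = d!\,c_d$, hence $\sum_{i=1}^{d} h_i = d!\,c_d - 1$. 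The argument is then a linear-programming observation: the maximum of the linear form $\sum_{i=1}^{d} h_i a_i$ over the scaled simplex $\{h_i \geq 0,\ \sum_{i=1}^{d} h_i = d!\,c_d - 1\}$ is attained at a vertex, so once we know $a_1$ is the largest of $a_1,\dots,a_d$ we may push all the free mass onto $i=1$ and get $c_r \leq a_0 + (d!\,c_d - 1)\,a_1$.

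The step I would prove first is the key inequality $a_i \leq a_1$ for $1 \leq i \leq d$ and $1 \leq r \leq d-1$. Using $\binom{x}{d} = \frac{1}{d!}x(x-1)\cdots(x-d+1)$, one factors
\[
d!\,\binom{t+d-i}{d} = \Bigl(\prod_{j=0}^{d-i}(t+j)\Bigr)\Bigl(\prod_{j=1}^{i-1}(t-j)\Bigr),
\qquad
d!\,\binom{t+d-1}{d} = \prod_{j=0}^{d-1}(t+j).
\]
Subtracting and pulling out the common factor $\prod_{j=0}^{d-i}(t+j)$, which has nonnegative coefficients, reduces the claim to showing that $\prod_{j=d-i+1}^{d-1}(t+j) - \prod_{j=1}^{i-1}(t-j)$ has nonnegative coefficients. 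Expanding both products in elementary symmetric functions of $A = \{d-i+1,\dots,d-1\}$ and $B = \{1,\dots,i-1\}$, the leading terms cancel, the odd-index coefficients are sums $e_s(A)+e_s(B) \geq 0$, and the even-index ones are exactly $e_s(A)-e_s(B)$; since $i \leq d$ the multiset $A$ dominates $B$ entrywise after sorting, and elementary symmetric functions are monotone on the nonnegative orthant, so $e_s(A) \geq e_s(B) \geq 0$. This proves $a_1 - a_i \geq 0$.

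It then remains to evaluate $a_0$ and $a_1$ in closed form. From the rising-factorial identity $x(x+1)\cdots(x+n-1) = \sum_k |s(n,k)|\,x^k$ applied to $t(t+1)\cdots(t+d-1)$ one gets $a_1 = \frac{1}{d!}|s(d,r)| = \frac{(-1)^{d-r}}{d!}\,s(d,r)$, and dividing $t(t+1)\cdots(t+d)$ by $t$ gives $a_0 = \frac{1}{d!}|s(d+1,r+1)| = \frac{(-1)^{d-r}}{d!}\,s(d+1,r+1)$. Substituting into $c_r \leq a_0 + (d!\,c_d-1)\,a_1$, so that $c_r \le (-1)^{d-r}s(d,r)\,c_d + (a_0 - a_1)$, and using the Stirling recurrence $s(d+1,r+1) = s(d,r) - d\,s(d,r+1)$ to rewrite $a_0 - a_1 = \frac{(-1)^{d-r-1}}{(d-1)!}\,s(d,r+1)$, collapses the bound to exactly $(-1)^{d-r}s(d,r)\,c_d + (-1)^{d-r-1}\frac{s(d,r+1)}{(d-1)!}$.

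The main obstacle is the key inequality $a_i \leq a_1$: it is the only genuinely combinatorial step, and it rests on the slightly delicate observation that $A$ dominates $B$ precisely because $\mathcal{P}$ has dimension $d \geq i$, together with the sign bookkeeping that makes the relevant difference polynomial coefficientwise nonnegative. Everything afterwards is symbolic manipulation with binomial coefficients and Stirling numbers, driven only by the positivity $h_i \geq 0$ from Theorem \ref{TH_EHR_LATTICE} and the volume identity (\ref{F_VOLUME_POLYNOM}).
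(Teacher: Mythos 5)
The paper does not actually prove this theorem --- it is quoted from Betke--McMullen \cite{BM} and then immediately relaxed into Theorem~\ref{TH_EHR_COEFF_BOUND}, the only version for which a proof appears. So there is no in-paper argument to check you against, but your blind proof is correct, and it is a genuine sharpening of the technique the paper uses for Theorem~\ref{TH_EHR_COEFF_BOUND}.

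Your skeleton is the one the paper uses there: writing $a_i$ for the coefficient of $t^r$ in $\binom{t+d-i}{d}$, one has $c_r = \sum_{i=0}^{d} h_i a_i$ with $h_0 = 1$, $h_i \geq 0$ and $\sum_i h_i = d!\,c_d$. The paper stops at the uniform estimate $|a_i| \leq a_0$ and does not treat $h_0$ specially, which yields only $|c_r| \leq |s(d+1,r+1)|\,c_d$. You instead keep the $h_0$ contribution exact and replace the coarse estimate by $a_i \leq a_1$ for $1 \leq i \leq d$, giving $c_r \leq a_0 + (d!\,c_d - 1)\,a_1$ directly from $h_i \geq 0$ (the linear-programming/vertex phrasing is harmless but unnecessary). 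Your proof of the key inequality $a_i \leq a_1$ is sound: after cancelling the nonnegative-coefficient factor $\prod_{j=0}^{d-i}(t+j)$, the polynomial $\prod_{j\in A}(t+j) - \prod_{j\in B}(t-j)$ with $A=\{d-i+1,\dots,d-1\}$, $B=\{1,\dots,i-1\}$ has coefficient of $t^{i-1-s}$ equal to $e_s(A) - (-1)^s e_s(B)$; the $s=0$ terms cancel, odd $s$ gives $e_s(A)+e_s(B)\geq 0$, and even $s>0$ gives $e_s(A)-e_s(B)\geq 0$ because $i\leq d$ makes $A$ dominate $B$ entrywise after sorting and elementary symmetric polynomials are monotone on the nonnegative orthant. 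The closed forms $a_1 = |s(d,r)|/d!$, $a_0 = |s(d+1,r+1)|/d!$ together with the recurrence $s(d+1,r+1) = s(d,r) - d\,s(d,r+1)$ then collapse $a_0 + (d!\,c_d - 1)a_1$ exactly to $(-1)^{d-r}s(d,r)c_d + (-1)^{d-r-1}\dfrac{s(d,r+1)}{(d-1)!}$. In short: same $h$-vector framework as the paper's Theorem~\ref{TH_EHR_COEFF_BOUND}, but with the two refinements (separating $h_0$, using $a_1$ rather than $a_0$ as the comparison coefficient) that are precisely what recover the tighter Betke--McMullen constant.
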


We recall that \emph{Stirling numbers of the first kind} $s(n, k)$ are defined through
\begin{equation}\label{F_STIRLING_NUMBERS}
    [x]_n = x(x-1) \dots (x-n+1) = \sum_{k=0}^{n} s(n, k) x^k.
\end{equation}
Now we extend the inequality above to rational polytopes.

\begin{theorem}\label{TH_EHR_COEFF_BOUND}
If $\mathcal{P}$ is a rational convex $d$-polytope with the denominator $p$ and Ehrhart quasipolynomial
$L_{\mathcal{P}}(t) = c_d t^d + c_{d-1}(t) t^{d-1} + \dots + c_1(t) t + 1$, then
\[
    |c_r(t)| \leq |s(d+1, r+1)| c_d \ \ \ \text{for} \
    r = 1, \dots, d-1, \ t \in \mathbb{Z}^+
\]
\end{theorem}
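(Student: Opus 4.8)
The plan is to reduce the rational case to the integral case via the explicit quasipolynomial formula from Lemma~\ref{L_EHR_QUASIPOLYNOM_FORMULA}, just as Betke--McMullen's bound for integral polytopes is proven by playing the nonnegativity of the $h_i$ against the binomial-coefficient expansion. Fix a residue $j \in \{0,\dots,p-1\}$. By Lemma~\ref{L_EHR_QUASIPOLYNOM_FORMULA} we have, for $t \equiv j \pmod p$,
\[
    L_{\mathcal{P}}(t) = f_j(t) = \sum_{i=0}^{d} h_{ip+j} \binom{t+d-i}{d},
\]
with all $h_{ip+j} \in \mathbb{N}$. Expanding each $\binom{t+d-i}{d} = \frac{1}{d!}[t+d-i]_d$ in powers of $t$ using the Stirling numbers \eqref{F_STIRLING_NUMBERS}, we can read off $c_r(t)$ (the coefficient of $t^r$ in $f_j$) as a linear combination $c_r(t) = \sum_{i=0}^d h_{ip+j}\, a_{i,r}$, where the $a_{i,r}$ are rational numbers depending only on $d$, $r$, $i$ (in particular, independent of $t$, so $c_r$ really is a \emph{constant} on each residue class $j$, confirming the notation $c_r(t)$ is piecewise constant). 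The key numerical facts I would establish are: (a) the coefficients $a_{i,r}$ satisfy $|a_{i,r}| \le |s(d+1,r+1)|/d!$ for all $i = 0,\dots,d$ — this is the point where the Stirling identity for $[t+d-i]_d$ needs to be bounded uniformly in the shift $i$; and (b) the leading coefficient satisfies $c_d = \frac{1}{d!}\sum_{i=0}^d h_{ip+j}$, which is exactly \eqref{F_VOLUME_QUASIPOLYNOM} (equals $\mathrm{vol}(\mathcal{P})$). Granting (a) and (b), the triangle inequality gives
\[
    |c_r(t)| = \Big| \sum_{i=0}^d h_{ip+j}\, a_{i,r} \Big|
    \le \frac{|s(d+1,r+1)|}{d!} \sum_{i=0}^d h_{ip+j}
    = |s(d+1,r+1)|\, c_d,
\]
which is the claimed bound.

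For step (a) I would use the generating-function or recursive description of Stirling numbers. Writing $\binom{t+d-i}{d} = \binom{(t-i)+d}{d}$ and substituting $u = t - i$, the coefficient of $t^r$ in $\frac{1}{d!}[u+d]_d$ as a polynomial in $u$ is $\frac{1}{d!}\binom{d}{r}\cdot(\text{something})$; but since we want the coefficient in $t$, not $u$, we re-expand via the binomial theorem in $t = u + i$. The cleanest route is probably to note that $\sum_r a_{i,r} t^r = \frac{1}{d!}\prod_{\ell=1}^{d}(t + (d-i-\ell+1)) = \frac{1}{d!}\prod_{\ell=0}^{d-1}(t - (i - d + \ell))$; each of the $d$ linear factors is $t$ plus an integer lying in $[-i,\, d-1-i] \subseteq [-d+1,\, d-1]$, hence each integer offset has absolute value at most $d-1 < d$. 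Therefore $a_{i,r}$ is, up to sign, at most the coefficient of $t^r$ in $\frac{1}{d!}\prod_{\ell=0}^{d-1}(t + \ell) = \frac{1}{d!}\binom{t+d-1}{d}\cdot\frac{d!}{?}$ — more precisely, $|a_{i,r}| \le \frac{1}{d!}\big|[\text{coeff of }t^r\text{ in }t(t+1)\cdots(t+d-1)]\big| = \frac{1}{d!}|s(d+1,r+1)|$, using that the unsigned Stirling numbers $|s(d+1,r+1)|$ are precisely the coefficients of $t(t+1)\cdots(t+d)$... I would need to pin down the exact index shift here, checking whether the relevant product has $d$ or $d+1$ factors; the statement's appearance of $s(d+1,r+1)$ rather than $s(d,r)$ or $s(d,r+1)$ strongly suggests the bounding product is $t(t+1)\cdots(t+d-1)$ times one more factor absorbed by comparing $\binom{t+d}{d}$-type terms, so a careful bookkeeping of which $h$'s contribute is essential.

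The main obstacle I anticipate is precisely this combinatorial bookkeeping in step (a): making the bound $|a_{i,r}| \le |s(d+1,r+1)|/d!$ uniform over \emph{all} shifts $i \in \{0,\dots,d\}$ simultaneously, and identifying the correct Stirling index. Unlike the integral case (where $i$ ranges and one uses a single clean inequality), here every residue class $j$ produces its own set of $d+1$ coefficients $h_{ip+j}$, but fortunately the \emph{same} bound $|a_{i,r}|$ works for each because the offsets stay within $[-d+1, d-1]$ regardless of $j$; the content is showing that the worst case among $i = 0,\dots,d$ is dominated by the coefficients of a fixed product of $d$ consecutive-integer-shifted linear forms, and that product's coefficients are unsigned Stirling numbers. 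Once that monotonicity/domination lemma for Stirling numbers is in hand (it follows from $|s(n,k)| = |s(n-1,k-1)| + (n-1)|s(n-1,k)|$ by an easy induction comparing products over nested intervals), the rest is the triangle-inequality assembly shown above together with the identification $c_d = \mathrm{vol}(\mathcal{P})$ from \eqref{F_VOLUME_QUASIPOLYNOM}. I would also double-check the boundary cases $r = 1$ and $r = d-1$ separately, and confirm the inequality is consistent with the integral case $p = 1$ (where it should recover, or at least be implied by, the Betke--McMullen bound).
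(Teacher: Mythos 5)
Your proposal takes essentially the same route as the paper: fix a residue $j$, write $f_j(t)=\sum_{i=0}^d h_{ip+j}\binom{t+d-i}{d}$ via Lemma~\ref{L_EHR_QUASIPOLYNOM_FORMULA}, bound $|c_r|$ by the triangle inequality using nonnegativity of the $h$'s, and close with the volume identity~\eqref{F_VOLUME_QUASIPOLYNOM}. The one place where you flag uncertainty (which product of linear factors dominates, and hence which Stirling index appears) resolves as follows: the comparison product is the $i=0$ term itself, $d!\binom{t+d}{d}=(t+1)(t+2)\cdots(t+d)$, whose $t^r$-coefficient is exactly $|s(d+1,r+1)|$, since $t\cdot(t+1)\cdots(t+d)=\sum_k|s(d+1,k)|t^k$. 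For the remaining $i=1,\dots,d$ the offsets in $\prod_{\ell=1-i}^{d-i}(t+\ell)$ have absolute values forming a multiset pointwise dominated (after sorting) by $\{0,1,\dots,d-1\}$, hence a fortiori by $\{1,\dots,d\}$, so the elementary symmetric functions, and therefore $\bigl|\binom{t+d-i}{d}\big|_r\bigr|$, are bounded by $\binom{t+d}{d}\big|_r$; your candidate $t(t+1)\cdots(t+d-1)$ would give the smaller bound $|s(d,r)|$ but fails to cover $i=0$. With that correction your argument coincides with the paper's proof.
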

\begin{proof}
For arbitrary polynomial $g(x)$ by $\left. g(x) \right|_i$ we denote the coefficient of $x^i$ in $g(x)$.

According to lemma \ref{L_EHR_QUASIPOLYNOM_FORMULA} we have $L_{\mathcal{P}}(t) = f_j(t)$ if $t \equiv j \pmod p$, where
\[
    f_j(t) = \sum_{i=0}^{d} h_{ip+j} \binom{t+d-i}{d}.
\]
Then
\[
    c_r(j) = \left. f_j(t) \right|_r =
    \left. \left( \sum_{i=0}^{d} h_{ip+j} \binom{t+d-i}{d} \right) \right|_r =
    \sum_{i=0}^{d} h_{ip+j} \left. \binom{t+d-i}{d} \right|_r.
\]
It is easy to see that for $i = 1, \dots, d$
\[
    \left| \left. \binom{t+d-i}{d} \right|_r \right| \leq \left. \binom{t+d}{d} \right|_r.
\]
From  (\ref{F_VOLUME_QUASIPOLYNOM}), since $h_i \in \mathbb{N}$, it follows that for $j = 0, \dots, p-1$
\begin{align*}
    |c_r(j)| &= | \left. f_j(t) \right|_r | \\ &\leq
    \sum_{i=0}^{d} h_{ip+j} \left| \left. \binom{t+d-i}{d} \right|_r \right| \\ &\leq
    \sum_{i=0}^{d} h_{ip+j} \left. \binom{t+d}{d} \right|_r \\ &=
    \left. \binom{t+d}{d} \right|_r \left( h_j + h_{p+j} + \dots + h_{dp+j} \right) \\ &=
    \left. \binom{t+d}{d} \right|_r d! \; vol(\mathcal{P}) =
    |s(d+1, r+1)| c_d.
\end{align*}
\end{proof}

Let $\mathbf{w_1}, \dots, \mathbf{w_d}$ be lineary independent vectors in $\mathbb{R}^d$.
The set
\[
\Lambda = \Lambda(\mathbf{w_1}, \dots, \mathbf{w_d}) =
\{ \alpha_1 \mathbf{w_1} + \dots + \alpha_d \mathbf{w_d} \mid \alpha_i \in \mathbb{Z} \}
\]
is called the {\em lattice} with basis $\{\mathbf{w_1}, \dots, \mathbf{w_d}\}$.
The number
\[
d(\Lambda) = | \det (\mathbf{w_1}, \dots, \mathbf{w_d}) |
\]
is called the {\em determinant} of the lattice.

A convex polytope $\mathcal{P}$ is called {\em lattice polytope} (with respect to the lattice $\Lambda$) if all of its vertices belongs to $\Lambda$.

We note that results of this section remains true if we replace the standard integer lattice $\mathbb{Z}^d$ by an arbitrary lattice $\Lambda(\mathbf{w_1}, \dots, \mathbf{w_d})$.
Indeed, consider the matrix $A = (\mathbf{w_1}, \dots, \mathbf{w_d})$ formed by $\mathbf{w_i}$ as columns and let $\psi$ be the linear transformation corresponding to $A$.
Then $\Lambda = \psi(\mathbb{Z}^d)$ and $\mathbb{Z}^d = \psi^{-1}(\Lambda)$.
If $\mathcal{P}$ is a rational $d$-polytope with respect to the basis $\{\mathbf{w_1}, \dots, \mathbf{w_d}\}$, then $\psi^{-1}(\mathcal{P})$ is a rational $d$-polytope with respect to the standard basis of $\mathbb{R}^d$ and
\[
    L_{\mathcal{P}, \Lambda}(t) =
    \left| \{ t\mathcal{P} \cap \Lambda \} \right| =
    \left| \{ t\psi^{-1}(\mathcal{P}) \cap \mathbb{Z}^d \} \right|.
\]
Given that $\mathrm{vol}(\mathcal{P}) = |\det(A)| \mathrm{vol}(\psi^{-1}(\mathcal{P}))$ we get that the leading coefficient of $L_{\mathcal{P}, \Lambda}(t)$ is equal to $\frac{\mathrm{vol}(\mathcal{P})}{d(\Lambda)}$.

\section{Systems solvable in $\mathbb{Z}^m$}
\label{sec:integral-systems}


Let $M \in \mathbb{Z}^{nk} (n \leq k)$ and $rank(M) = n$, then by $\gcd(M)$ we denote the \emph{greatest divisor} of $M$, defined as the greatest common divisor of the determinants of $M$.
The determinants of a matrix are, of course, the determinants of the greatest square matrices contained in it.
A matrix $M$ is called {\em unimodular} if $\gcd(M)=1$.

First we recall some classical criterions telling whether a linear diophantine systems of the form
\begin{equation}\label{F_SYS_IN_Z_2}
    Ax = b \ \ \text{with} \
    A \in \mathbb{Z}^{nk}, \;
    b \in \mathbb{Z}^{n},
\end{equation}
has integral solution.

\begin{theorem}[Smith \cite{SMITH}]\label{TH_SMITH}
Let $Ax = b$ be a system of the form (\ref{F_SYS_IN_Z_2}) with $rank(A) = n$.
Then the system has integral solution if and only if the greatest divisors of its augmented and unaugmented matrices are equal.
\end{theorem}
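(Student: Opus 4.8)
The plan is to put $A$ into Smith normal form, rewrite both the system $Ax=b$ and the greatest divisors of $A$ and $(A\mid b)$ in the resulting diagonal coordinates, and then observe that the condition for integral solvability and the condition $\gcd(A)=\gcd(A\mid b)$ become one and the same statement.

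Since $\mathrm{rank}(A)=n$, there are unimodular matrices $U$ (of size $n$) and $V$ (of size $k$) with $UAV=(D\mid 0)$, where $D=\mathrm{diag}(d_1,\dots,d_n)$ with all $d_i\neq 0$ (the invariant factors of $A$) and $0$ is the $n\times(k-n)$ zero block. Put $b'=Ub\in\mathbb{Z}^n$. Because $V$ is unimodular the map $y\mapsto Vy$ is a bijection of $\mathbb{Z}^k$, so $Ax=b$ has an integral solution if and only if $(D\mid 0)y=b'$ does; reading this coordinatewise (the last $k-n$ coordinates of $y$ are free and the first $n$ satisfy $d_iy_i=b_i'$) this holds if and only if $d_i\mid b_i'$ for every $i=1,\dots,n$.

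Next I use that the greatest divisor is unchanged under left and right multiplication by unimodular matrices: by the Cauchy--Binet formula the maximal minors of a product of integer matrices are $\mathbb{Z}$-linear combinations of maximal minors of the factors, and applying this in both directions (the inverses $U^{-1},V^{-1}$ are again integral) gives $\gcd(XMY)=\gcd(M)$ for unimodular $X,Y$. Hence $\gcd(A)=\gcd(UAV)=\gcd(D\mid 0)$, and multiplying $(A\mid b)$ on the left by $U$ and on the right by the $(k+1)\times(k+1)$ unimodular matrix equal to $V$ in the leading $k\times k$ block and $1$ in the last diagonal entry gives $\gcd(A\mid b)=\gcd(D\mid 0\mid b')$. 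A nonzero $n\times n$ minor of $(D\mid 0)$ must use the first $n$ columns and equals $d_1\cdots d_n$, so $\gcd(A)=d_1\cdots d_n$. A nonzero $n\times n$ minor of $(D\mid 0\mid b')$ must avoid every zero column, hence either uses the first $n$ columns (value $d_1\cdots d_n$) or uses the $b'$-column together with all but one of them; omitting column $i$ and expanding along row $i$ gives, up to sign, $b_i'\prod_{j\neq i}d_j$. Therefore
\[
    \gcd(A\mid b)=\gcd\bigl(d_1\cdots d_n,\ b_1'\!\!\prod_{j\neq 1}d_j,\ \dots,\ b_n'\!\!\prod_{j\neq n}d_j\bigr).
\]

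Finally, $\gcd(A\mid b)\mid\gcd(A)$ always holds (every maximal minor of $A$ is one of $(A\mid b)$), so $\gcd(A)=\gcd(A\mid b)$ if and only if $d_1\cdots d_n$ divides each $b_i'\prod_{j\neq i}d_j$, i.e.\ if and only if $d_i\mid b_i'$ for all $i$ --- precisely the solvability condition obtained above. The only point that needs care is the bookkeeping in the last computation (which columns a nonzero maximal minor of the augmented diagonal matrix can contain, and the signs from the Laplace expansion), together with the invariance of $\gcd$ under unimodular operations; once these are in place the equivalence is immediate. A self-contained alternative would be to run integer Gaussian elimination on $(A\mid b)$ directly and track how the maximal minors transform, but invoking the Smith normal form keeps the argument short.
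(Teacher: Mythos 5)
The paper does not prove this statement; it is quoted as a classical criterion and attributed to \cite{SMITH}, so there is no in-text argument to compare yours against. That said, your proof is correct and is the standard one via Smith normal form. The key steps all check out: since $\mathrm{rank}(A)=n$, all invariant factors $d_i$ are nonzero; left multiplication by an $n\times n$ unimodular $U$ multiplies every maximal minor by $\det U=\pm1$, and right multiplication by a unimodular $V$ preserves the greatest divisor by Cauchy--Binet together with the integrality of $V^{-1}$; the nonzero maximal minors of $(D\mid 0\mid b')$ are exactly $\pm d_1\cdots d_n$ and $\pm\, b_i'\prod_{j\neq i}d_j$; and because $\prod_{j\neq i}d_j=(d_1\cdots d_n)/d_i$ is a nonzero integer, the divisibility $d_1\cdots d_n\mid b_i'\prod_{j\neq i}d_j$ is equivalent to $d_i\mid b_i'$. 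Together with the automatic containment $\gcd(A\mid b)\mid\gcd(A)$, this gives exactly the claimed equivalence. A small stylistic shortcut would be to note at the outset that both the solvability of $Ax=b$ over $\mathbb{Z}$ (i.e.\ $b$ lying in the column lattice of $A$) and the condition $\gcd(A)=\gcd(A\mid b)$ are invariant under unimodular row and column operations on $A$ (with the matching row operation applied to $b$), so that one may assume $A=(D\mid 0)$ from the start without re-deriving the invariance of the column lattice; but your more explicit bookkeeping is complete and sound.
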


We also mention a criterion due to Van der Waerden.

\begin{theorem}[Van der Waerden]\label{TH_VAN_DER_WAERDEN}
A system $Ax = b$ of the form (\ref{F_SYS_IN_Z_2}) has integral solution if and only if for every $v \in \mathbb{Q}^n$ such that $vA \in \mathbb{Z}^k$, $(v, b) \in \mathbb{Z}$.
\end{theorem}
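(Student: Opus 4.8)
The plan is to prove necessity by a one-line computation and sufficiency by reducing to Smith normal form. For necessity, suppose $x_0 \in \mathbb{Z}^k$ satisfies $Ax_0 = b$. Then for any $v \in \mathbb{Q}^n$ with $vA \in \mathbb{Z}^k$ we have $(v, b) = (v, Ax_0) = (vA, x_0) \in \mathbb{Z}$, since $vA$ and $x_0$ are both integer vectors; note that no hypothesis on the rank of $A$ is needed here (unlike in theorem \ref{TH_SMITH}).

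For sufficiency, I would invoke the existence of the Smith normal form over $\mathbb{Z}$: write $A = UDW$ with $U \in \mathrm{GL}_n(\mathbb{Z})$, $W \in \mathrm{GL}_k(\mathbb{Z})$, and $D$ the $n \times k$ matrix having entries $d_1 \mid d_2 \mid \cdots \mid d_\ell$ (where $\ell = rank(A)$ and all $d_i \neq 0$) in positions $(1,1), \dots, (\ell,\ell)$ and zeros elsewhere. Since $U$ and $W$ are unimodular, $Ax = b$ is solvable over $\mathbb{Z}$ if and only if $Dy = c$ is, where $y = Wx$ and $c = U^{-1}b \in \mathbb{Z}^n$; and the diagonal system $Dy = c$ is solvable over $\mathbb{Z}$ precisely when $d_i \mid c_i$ for $i = 1, \dots, \ell$ and $c_i = 0$ for $i = \ell+1, \dots, n$. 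It therefore suffices to extract these divisibility and vanishing conditions from the hypothesis.

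To do this I would carry the hypothesis through the same change of variables. The map $v \mapsto w := vU$ is a bijection of $\mathbb{Q}^n$, and right multiplication by $W$ preserves $\mathbb{Z}^k$, so the set $\{v \in \mathbb{Q}^n : vA \in \mathbb{Z}^k\}$ corresponds under this map to $\{w \in \mathbb{Q}^n : wD \in \mathbb{Z}^k\}$, while $(v,b) = (w,c)$. Since $wD = (d_1 w_1, \dots, d_\ell w_\ell, 0, \dots, 0)$, the hypothesis becomes: whenever $d_i w_i \in \mathbb{Z}$ for all $i \le \ell$, one has $\sum_{i=1}^n w_i c_i \in \mathbb{Z}$. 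Specializing to $w = \frac{1}{d_i} e_i$ for each $i \le \ell$ yields $c_i/d_i \in \mathbb{Z}$, i.e. $d_i \mid c_i$; specializing to $w = t e_i$ with $t \in \mathbb{Q}$ arbitrary for each $i > \ell$ yields $t c_i \in \mathbb{Z}$ for all $t$, hence $c_i = 0$. This gives solvability of $Dy = c$, and pulling back by $W^{-1}$ produces an integral solution of $Ax = b$.

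The only real care needed is the bookkeeping in the change of variables — in particular keeping track that $D$ is rectangular, so that the rows $i > \ell$ genuinely contribute the conditions $c_i = 0$ — together with citing the Smith normal form (equivalently, the structure theorem for subgroups of $\mathbb{Z}^n$); I do not expect a deeper obstacle. An alternative route is to reformulate the hypothesis as the statement that $b$ lies in the double dual $\{b' \in \mathbb{Q}^n : (v,b') \in \mathbb{Z} \text{ for all } v \text{ with } vA \in \mathbb{Z}^k\}$ of the subgroup $A\mathbb{Z}^k \subseteq \mathbb{Z}^n$, and then prove that this double dual coincides with $A\mathbb{Z}^k$; but that fact itself reduces to the same Smith-normal-form computation, so I would present the direct argument above.
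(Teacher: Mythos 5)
The paper does not prove Theorem~\ref{TH_VAN_DER_WAERDEN}: it is stated as a classical criterion attributed to Van der Waerden, and the author immediately switches to the lattice-membership reformulation of Lemma~\ref{L_SOLVABILITY_CRITERION_Zm} for the actual computations, so there is no in-paper proof to compare against. Your argument is correct and complete. The necessity direction is the standard adjointness identity $(v,Ax_0)=(vA,x_0)$, and you are right that it needs no rank hypothesis. For sufficiency, passing to Smith normal form $A=UDW$ is exactly the right move: the bijection $v\mapsto w=vU$ carries the hypothesis to the diagonal system because $W\in\mathrm{GL}_k(\mathbb{Z})$ makes $vA\in\mathbb{Z}^k$ equivalent to $wD\in\mathbb{Z}^k$, and $(v,b)=(w,c)$ with $c=U^{-1}b$. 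The two specializations $w=d_i^{-1}e_i$ for $i\le\ell$ and $w=te_i$ with $t\in\mathbb{Q}$ arbitrary for $i>\ell$ then extract precisely $d_i\mid c_i$ and $c_i=0$, which together characterize solvability of $Dy=c$ over $\mathbb{Z}$; pulling back through $W^{-1}$ and $U$ finishes. The rectangular bookkeeping (that $wD$ has $k$ entries, the last $k-\ell$ identically zero, while the zero rows of $D$ for $\ell<i\le n$ supply the conditions $c_i=0$) is handled correctly. The alternative double-dual phrasing you mention is indeed equivalent and would make the statement read as ``$A\mathbb{Z}^k$ equals its own bidual in $\mathbb{Z}^n$,'' but as you say this reduces to the same Smith-normal-form computation.
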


However, we will use other criterion wich allows us to apply results of Section \ref{sec:ehrhart}.

For $n \leq k$ consider a system from $\mathbb{Z}^m_{X, n}$ of the form
\begin{equation}\label{F_SYS_IN_Zm_2}
    AX = B \ \ \text{with} \ A \in \mathbb{Z}^{nk}, B \in \mathbb{Z}^{nm}.
\end{equation}
Denote $A_i$ and $B_j$ the columns of $A$ and $B$ respectively, and let $H_A = \left\langle A_1, \dots, A_k \right\rangle \leq \mathbb{Z}^n$ be the subgroup generated by the columns of $A$.
Then the following obvious lemma holds.

\begin{lemma}\label{L_SOLVABILITY_CRITERION_Zm}
A system $AX=B$ of the form (\ref{F_SYS_IN_Zm_2}) is solvable in $\mathbb{Z}^m$ if and only if $B_i \in H_A$ for every $i = 1, \dots, m$.
\end{lemma}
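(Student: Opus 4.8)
The plan is to deduce the statement directly from Lemma~\ref{L_SYS_IN_Zm_SOLUTION} together with the definition of $H_A$, since the coordinates of the vector variables already decouple. First I would apply Lemma~\ref{L_SYS_IN_Zm_SOLUTION}: the system $AX = B$ of the form~(\ref{F_SYS_IN_Zm_2}) is solvable in $\mathbb{Z}^m$ if and only if each scalar system $Ax = B_i$ is solvable over $\mathbb{Z}$, where $x = (x_{1i}, \dots, x_{ki})^T$ collects the $i$-th coordinates of the vector variables $\mathbf{x_1}, \dots, \mathbf{x_k}$, and $B_i$ denotes the $i$-th column of $B$, for $i = 1, \dots, m$.

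Next I would rewrite the matrix--vector product as $Ax = x_{1i} A_1 + \dots + x_{ki} A_k$, a $\mathbb{Z}$-linear combination of the columns $A_1, \dots, A_k$ of $A$. Hence $Ax = B_i$ admits an integral solution precisely when $B_i$ can be expressed as such a combination, which by the definition $H_A = \langle A_1, \dots, A_k \rangle$ is exactly the condition $B_i \in H_A$. Chaining the two equivalences yields: $AX = B$ is solvable in $\mathbb{Z}^m$ $\iff$ $Ax = B_i$ is solvable over $\mathbb{Z}$ for all $i$ $\iff$ $B_i \in H_A$ for all $i = 1, \dots, m$, which is the claim.

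I do not expect any genuine obstacle here, as the statement is essentially a reformulation of definitions (which is why the paper calls it obvious). The only point requiring a moment of care is the bookkeeping between rows and columns: the $\mathbf{x_j}$ appear as rows of $X$ while $H_A$ is generated by the columns of $A$, so one must check that the scalar systems produced by Lemma~\ref{L_SYS_IN_Zm_SOLUTION} really are ``columns of $A$ against the scalar unknowns'', which holds by the definition of the product $AX$ in~(\ref{F_SYS_IN_Zm}).
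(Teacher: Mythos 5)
Your proof is correct and is exactly the argument the paper has in mind: the paper states this lemma without proof, calling it obvious, and your reduction via Lemma~\ref{L_SYS_IN_Zm_SOLUTION} to the scalar systems $Ax = B_i$, followed by the observation that $Ax = B_i$ is solvable over $\mathbb{Z}$ if and only if $B_i$ lies in the column lattice $H_A = \langle A_1, \dots, A_k\rangle$, is the intended justification.
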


If $rank(A) = n$ then $H_A$ is of finite index, thus $H_A$ is a $n$-dimensional lattice in $\mathbb{R}^n$ with determinant $d(H_A) = |\mathbb{Z}^n : H_A| = \gcd(A)$.
Therefore
\[
    \left| \{ B_r^n \cap H_A \} \right| =
    \left| \{ r B_1^n \cap H_A \} \right| =
    L_{B_1^n, H_A}(r)
\]
is Ehrhart quasipolynomial for $B_1^n$, which further will be denoted by $L_A(r)$.
Consider the sum
\begin{equation}\label{F_MAIN_SUM}
    S_{m, k, n}(r) = \sum_{\substack{ A \in B_r^{nk} \\ rank(A) = n }} L_{A}^{m}(r).
\end{equation}
According to lemma \ref{L_SOLVABILITY_CRITERION_Zm} this sum describes the number of solvable systems of the form (\ref{F_SYS_IN_Zm_2}) with $rank(A)=n$ in the ball $B_r^{n(k+m)}$.
Let
\begin{gather}
    L_A(t) = c_{A, n} t^n + c_{A, n-1}(t) t^{n-1} + \dots + c_{A, 1}(t) t + 1, \nonumber \\
    L_{A}^{m}(r) = \sum_{i = 0}^{mn} \alpha_{A, i}r^i, \nonumber
\end{gather}
then $S_{m, k, n}(r)$ can be presented as
\[
    S_{m, k, n}(r) =
    \sum_{\substack{ A \in B_r^{nk} \\ rank(A) = n }} \sum_{i = 0}^{mn} \alpha_{A, i}r^i =
    \sum_{i = 0}^{mn} \left( \sum_{\substack{ A \in B_r^{nk} \\ rank(A) = n }}  \alpha_{A, i} \right) r^i =
    \sum_{i = 0}^{mn} s_{m,k,n,i}(r) r^i.
\]
It is easy to see that $\alpha_{A, 0} = 1$ and
\[
    \alpha_{A, mn} = c_{A, n}^{m} = \left( \dfrac{\mathrm{vol}(B_1^n)}{d(H_A)} \right)^m = 2^{mn} \gcd(A)^{-m},
\]
so
\[
    s_{m, k, n, mn}(r) = \sum_{\substack{ A \in B_r^{nk} \\ rank(A) = n }} \alpha_{A, mn} =
    2^{mn} \sum_{\substack{ A \in B_r^{nk} \\ rank(A) = n }} \gcd(A)^{-m}.
\]

\begin{lemma}\label{L_MAIN_SUM_LEADING_COEFF_O}
As $r$ tends to infinity $s_{m, k, n, mn}(r) = O(r^{nk})$.
\end{lemma}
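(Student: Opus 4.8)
The plan is to reduce everything to the trivial pointwise estimate $\gcd(A)^{-m} \le 1$, after which the bound becomes a matter of counting the number of summands. First I would invoke the closed form for $s_{m,k,n,mn}(r)$ obtained just above the lemma: since
\[
    \alpha_{A, mn} = c_{A, n}^{m} = \left( \dfrac{\mathrm{vol}(B_1^n)}{d(H_A)} \right)^m = 2^{mn} \gcd(A)^{-m},
\]
summing $\alpha_{A,mn}$ over full-rank $A \in B_r^{nk}$ gives
\[
    s_{m, k, n, mn}(r) = 2^{mn} \sum_{\substack{ A \in B_r^{nk} \\ rank(A) = n }} \gcd(A)^{-m}.
\]

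Next I would observe that for any integer matrix $A$ of full rank $n$ the greatest divisor $\gcd(A)$ is a well-defined positive integer: at least one $n \times n$ minor of $A$ is nonzero, and the greatest common divisor of a finite family of integers that are not all zero is a positive integer. Hence $\gcd(A) \ge 1$, so $0 < \gcd(A)^{-m} \le 1$ for every term of the sum. The index set $\{ A \in B_r^{nk} \mid rank(A) = n \}$ is contained in $B_r^{nk}$, which has $(2r+1)^{nk}$ elements, so
\[
    0 \le s_{m, k, n, mn}(r) \le 2^{mn} (2r+1)^{nk} = O(r^{nk})
\]
as $r \to \infty$, which is the claimed bound.

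There is no real obstacle here; the estimate is deliberately crude. Its only role is to fix the order of magnitude of the top ``column sum'', so that the corresponding term $s_{m,k,n,mn}(r)\,r^{mn}$ of $S_{m,k,n}(r)$ has order $r^{nk+mn}$, i.e.\ is comparable with $|B_r^{n(k+m)}| = (2r+1)^{n(k+m)}$ and can therefore contribute a nonzero asymptotic density; the lower-order column sums $s_{m,k,n,i}(r)$ with $i<mn$ will later be controlled using Theorem \ref{TH_EHR_COEFF_BOUND}. The genuinely delicate question --- whether $s_{m,k,n,mn}(r)/(2r+1)^{nk}$ actually converges, and to which limit expressed through sums of inverse greatest divisors over full-rank matrices --- lies deeper and is the content of Theorem \ref{TH_GCD_SUMS} and Conjecture \ref{CON_1}; it plays no part in the present $O$-bound.
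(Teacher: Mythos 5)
Your proof is correct and uses the same argument as the paper: bound $\gcd(A)^{-m}\le 1$ pointwise (since $\gcd(A)\ge 1$ for full-rank $A$) and then estimate the number of summands by $|B_r^{nk}|=(2r+1)^{nk}$, yielding $s_{m,k,n,mn}(r)\le 2^{mn}(2r+1)^{nk}=O(r^{nk})$.
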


\begin{proof} Observe, that
\begin{align*}
    s_{m, k, n, mn}(r) &=
    2^{mn} \sum_{\substack{ A \in B_r^{nk} \\ rank(A) = n }} \gcd(A)^{-m} \\ &\leq
    2^{mn} \sum_{\substack{ A \in B_r^{nk} \\ rank(A) = n }} 1 \\ &\leq
    2^{mn} \left| B_r^{nk} \right| = 2^{mn} (2r + 1)^{nk}.
\end{align*}
\end{proof}

The following lemma shows that $r^{mn} s_{m, k, n, mn}(r)$ makes a major contribution to the sum $S_{m, k, n}(r)$ from the asymptotic point of view.

\begin{lemma}\label{L_MAIN_SUM_OTHER_COEFF}
$\lim\limits_{r \to \infty} \dfrac{s_{m,k,n,i}(r) r^i}{r^{n(k+m)}} = 0$ for $i = 0, \dots, mn-1$.
\end{lemma}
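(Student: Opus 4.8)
The plan is to show that every coefficient of the quasipolynomial $L_A^m$ is bounded in absolute value by a constant depending only on $m$ and $n$ (not on $A$ or $r$), and then to estimate $s_{m,k,n,i}(r)=\sum_{A}\alpha_{A,i}(r)$ by the sheer number of rank $n$ matrices in $B_r^{nk}$. The idea is that while $\gcd(A)$ carries real arithmetic weight in the top term $i=mn$ (this is the content of Lemma~\ref{L_MAIN_SUM_LEADING_COEFF_O}), for the lower terms $i<mn$ one loses nothing by discarding it.

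First I would collect the facts about $L_A(t)=c_{A,n}t^n+c_{A,n-1}(t)t^{n-1}+\dots+c_{A,1}(t)t+1$. By the remark closing Section~\ref{sec:ehrhart}, $L_A(t)$ is the Ehrhart quasipolynomial of the cube $B_1^n$ with respect to the lattice $H_A$; hence $c_{A,0}=1$ and $c_{A,n}=\mathrm{vol}(B_1^n)/d(H_A)=2^n\gcd(A)^{-1}$, so $0<c_{A,n}\le 2^n$ because $\gcd(A)\ge 1$. Applying Theorem~\ref{TH_EHR_COEFF_BOUND} to the rational $n$-polytope $\psi^{-1}(B_1^n)$, where $\psi$ is the linear isomorphism carrying $\mathbb{Z}^n$ onto $H_A$, gives $|c_{A,j}(t)|\le |s(n+1,j+1)|\,c_{A,n}\le 2^n|s(n+1,j+1)|$ for $j=1,\dots,n-1$ and all $t\in\mathbb{Z}^+$. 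Consequently there is a constant $D=D(n)$ with $|c_{A,j}(t)|\le D$ for all $j=0,\dots,n$, all $t\in\mathbb{Z}^+$, and all $A$ of rank $n$.

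Next I would expand the $m$-th power: regarded as a quasipolynomial in $t$, $L_A^m(t)=\sum_{i=0}^{mn}\alpha_{A,i}(t)\,t^i$ with $\alpha_{A,i}(t)=\sum\prod_{l=1}^m c_{A,j_l}(t)$, the inner sum taken over the at most $(n+1)^m$ tuples $(j_1,\dots,j_m)$ with $0\le j_l\le n$ and $j_1+\dots+j_m=i$. Hence $|\alpha_{A,i}(t)|\le (n+1)^mD^m=:M$, a constant independent of $A$ and $t$, and therefore $|s_{m,k,n,i}(r)|\le M\,|B_r^{nk}|=M(2r+1)^{nk}$. For $0\le i\le mn-1$ this yields
\[
  \frac{|s_{m,k,n,i}(r)|\,r^i}{r^{n(k+m)}}\ \le\ M\Bigl(\frac{2r+1}{r}\Bigr)^{nk}r^{\,i-nm}\ \longrightarrow\ 0 \qquad (r\to\infty),
\]
because $i-nm<0$, which is the desired conclusion.

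I do not anticipate a serious obstacle; the one step that requires a little care is invoking Theorem~\ref{TH_EHR_COEFF_BOUND} for the lattice $H_A$ rather than the standard lattice $\mathbb{Z}^n$, but this is covered verbatim by the final remark of Section~\ref{sec:ehrhart} together with the identity $\mathrm{vol}(\psi^{-1}(B_1^n))=2^n/\gcd(A)$. The estimate is deliberately wasteful --- it ignores all cancellation and all the structure of $\gcd(A)$ --- yet it suffices, since the trivial bound $(2r+1)^{nk}$ on the number of rank $n$ matrices already beats $r^{n(k+m)-i}$ by the diverging factor $r^{nm-i}$.
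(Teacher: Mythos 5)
Your proof is correct, and the key step --- invoking Theorem~\ref{TH_EHR_COEFF_BOUND} to bound the quasipolynomial coefficients of $L_A$ --- is the same one the paper uses, so the two arguments agree in substance. Where you diverge is in the bookkeeping: you immediately fold $\gcd(A)\ge 1$ into a constant bound $|c_{A,j}(t)|\le D$ independent of $A$, so that every $|\alpha_{A,i}(t)|$ is bounded by a single constant $M$ and the estimate collapses to counting rank-$n$ matrices in $B_r^{nk}$. The paper instead retains the factor $c_{A,n}=2^n\gcd(A)^{-1}$ throughout, decomposes $s_{m,k,n,i}(r)$ into finitely many ``mixed'' sums $\Omega(r)=\sum_A c_{A,i_1}(r)\cdots c_{A,i_s}(r)$ with each $i_j\ge 1$, bounds each $\Omega(r)$ by a constant times $s_{s,k,n,sn}(r)$, and only then invokes Lemma~\ref{L_MAIN_SUM_LEADING_COEFF_O} (which, in the end, also just drops $\gcd(A)^{-1}\le 1$). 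Your route is a bit more direct and self-contained, avoiding the detour through Lemma~\ref{L_MAIN_SUM_LEADING_COEFF_O}; the paper's route keeps the $\gcd(A)^{-1}$-dependence explicit up to the last moment, which is in keeping with the rest of the section where that quantity is the central object being compared against the top-degree term.
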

\begin{proof}
The case $i = 0$ is obvious since $\alpha_{A, 0} = 1$.

According to the theorem \ref{TH_EHR_COEFF_BOUND} for $r = 1, \dots, n-1$
\[
    |c_{A, r}(t)| \leq \max_{i} \{ |s(n+1, i)| \} c_{A, n},
\]
then for any sum of the form
\begin{equation}\label{F_MIXED_SUM}
    \Omega(r) = \sum_{\substack{ A \in B_r^{nk} \\ rank(A) = n }} c_{A, i_1}(r) \dots c_{A, i_s}(r),
    \ \ \text{where} \ i_j \in \{ 1, \dots, n \}
\end{equation}
we have
$
    |\Omega(r)| \leq \beta s_{s, k, n, sn}(r),
$
for some constant $ \beta $, which doesn't depend on $r$.
So $\Omega(r)=O(r^{nk})$.
Any sum
\[
    s_{m, k, n, i}(r) =
    \sum_{\substack{ A \in B_r^{nk} \\ rank(A) = n }}  \alpha_{A, i}
\]
is a finite sum of the sums of the form (\ref{F_MIXED_SUM}), so it is also $O(r^{nk})$.

Statement of the lemma now follows from the fact that for any function $f(r) = O(r^{nk})$
\[
    \lim_{r \to \infty} \dfrac{f(r)r^i}{r^{n(k+m)}} = 0 \ \ \text{if} \ i < mn.
\]
\end{proof}

Denote
\[
    F_{m, k, n}(r) = \dfrac{s_{m, k, n, mn}(r)}{2^{mn}}=
    \sum_{\substack{ A \in B_r^{nk} \\ rank(A) = n }} \gcd(A)^{-m}.
\]
We now establish a connection between $F_{m, k, n}(r)$ and  asymptotic density of the set $SAT(\mathbb{Z}^m, k, n)$.

\begin{theorem}\label{TH_GCD_SUMS}
$\rho(SAT(\mathbb{Z}^m, k, n)) = \rho$ if and only if
$
    \lim\limits_{r \to \infty} \dfrac{F_{m,k,n}(r)}{(2r)^{nk}} = \rho.
$
\end{theorem}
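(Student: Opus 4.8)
The plan is to show that $\rho_r(SAT(\mathbb{Z}^m,k,n))$ and $F_{m,k,n}(r)/(2r)^{nk}$ differ by a null sequence; once this is established both implications are immediate. First I would split the solvable systems lying in the ball $B_r = B_r^{n(k+m)}$ according to the rank of their coefficient matrix $A$. If $\mathrm{rank}(A)=n$, then by Lemma~\ref{L_SOLVABILITY_CRITERION_Zm} a matrix $B$ with $\|B\|_\infty\le r$ makes $(A|B)$ solvable exactly when every column $B_i$ lies in $H_A$, so the number of such $B$ is $\prod_{i=1}^m |B_r^n\cap H_A| = L_A^m(r)$; summing over full rank $A\in B_r^{nk}$ shows that $S_{m,k,n}(r)$ counts precisely the full rank solvable systems in $B_r$. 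The systems with $\mathrm{rank}(A)<n$ number at most $\bigl(\sum_{s=0}^{n-1}|V_{n,k,s}(\mathbb{Z})\cap B_r^{nk}|\bigr)(2r+1)^{nm}$, and by~(\ref{F_RANK_DENSITY}) (equivalently, by $\rho(V_{n,k,n}(\mathbb{Z}))=1$) the first factor is $o(r^{nk})$, so this whole contribution is $o(r^{n(k+m)})=o(|B_r|)$. Hence
\[
    \rho_r(SAT(\mathbb{Z}^m,k,n)) = \frac{S_{m,k,n}(r)}{(2r+1)^{n(k+m)}} + o(1).
\]

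Next I would use the expansion $S_{m,k,n}(r)=\sum_{i=0}^{mn}s_{m,k,n,i}(r)r^i$ and divide by $|B_r|=(2r+1)^{n(k+m)}$. For each $i<mn$, Lemma~\ref{L_MAIN_SUM_OTHER_COEFF} gives $s_{m,k,n,i}(r)r^i/r^{n(k+m)}\to 0$, and since $r^{n(k+m)}/(2r+1)^{n(k+m)}$ is bounded, those terms tend to $0$. Using $s_{m,k,n,mn}(r)=2^{mn}F_{m,k,n}(r)$, the one surviving term is, after a routine rearrangement of powers of $2$ and $r$,
\[
    \frac{2^{mn}F_{m,k,n}(r)r^{mn}}{(2r+1)^{n(k+m)}}
    = \left(\frac{2r}{2r+1}\right)^{n(k+m)}\frac{F_{m,k,n}(r)}{(2r)^{nk}}.
\]
Writing $\lambda_r=\left(2r/(2r+1)\right)^{n(k+m)}$, so that $\lambda_r\to 1$ and $\lambda_r>0$ for $r\ge 1$, the two steps combine to
\[
    \rho_r(SAT(\mathbb{Z}^m,k,n)) = \lambda_r\,\frac{F_{m,k,n}(r)}{(2r)^{nk}} + o(1).
\]

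From this identity both directions follow by elementary limit arithmetic: if $F_{m,k,n}(r)/(2r)^{nk}\to\rho$ then the right-hand side converges to $\rho\cdot 1=\rho$, giving $\rho(SAT(\mathbb{Z}^m,k,n))=\rho$; conversely, if $\rho_r(SAT(\mathbb{Z}^m,k,n))\to\rho$ then $\lambda_r F_{m,k,n}(r)/(2r)^{nk}\to\rho$, and dividing by $\lambda_r\to 1$ yields $F_{m,k,n}(r)/(2r)^{nk}\to\rho$. The step I expect to require the most care is the first one, namely confirming that $S_{m,k,n}(r)$ accounts for all but an asymptotically negligible fraction of the solvable systems — i.e.\ that a matrix $A$ of deficient rank cannot make up for its scarcity by admitting anomalously many compatible right-hand sides; this is precisely what the crude bound $(2r+1)^{nm}$ on the number of such $B$, combined with~(\ref{F_RANK_DENSITY}), rules out. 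Everything after that is bookkeeping with the expansion of $S_{m,k,n}(r)$ and Lemma~\ref{L_MAIN_SUM_OTHER_COEFF}.
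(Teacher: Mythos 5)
Your proof is correct and follows essentially the same route as the paper: reduce to full-rank systems via~(\ref{F_RANK_DENSITY}), identify the count of full-rank solvable systems in $B_r$ with $S_{m,k,n}(r)$ via Lemma~\ref{L_SOLVABILITY_CRITERION_Zm}, and isolate the leading term $s_{m,k,n,mn}(r)r^{mn} = 2^{mn}F_{m,k,n}(r)r^{mn}$ via Lemma~\ref{L_MAIN_SUM_OTHER_COEFF}. You are somewhat more explicit than the paper in packaging the argument as a single identity $\rho_r(SAT(\mathbb{Z}^m,k,n)) = \lambda_r\,F_{m,k,n}(r)/(2r)^{nk} + o(1)$ with $\lambda_r\to 1$, which cleanly yields both directions of the equivalence at once, whereas the paper argues one direction and remarks that the converse is analogous.
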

\begin{proof}
If asymptotic density of the set $SAT(\mathbb{Z}^m, k, n)$ exists, then according to (\ref{F_RANK_DENSITY}) only systems with full rank matrices contribute to asymptotic density, so
\[
    \lim_{r \to \infty} \dfrac{S_{m,k,n}(r)}{|B_r^{n(k+m)}|} = \rho.
\]
According to lemma \ref{L_MAIN_SUM_OTHER_COEFF} only the leading term of $S_{m,k,n}(r)$ contribute to the limit above, hence
\[
    \rho =
    \lim_{r \to \infty} \dfrac{s_{m,k,n,mn}(r)r^{mn}}{|B_r^{n(k+m)}|} =
    \lim_{r \to \infty} \dfrac{s_{m,k,n,mn}(r)r^{mn}}{(2r)^{n(k+m)}} =
    \lim\limits_{r \to \infty} \dfrac{F_{m,k,n}(r)}{(2r)^{nk}}.
\]
Arguing as above one can show that the converse is true.
\end{proof}

Now we consider the case when $n=k$ and $m=1$. Then
\begin{equation}\label{F_INVERSE_DET_SUM}
    F_{1, n, n}(r) = \sum_{\substack{ A \in B_r^{nn} \\ rank(A) = n }} \dfrac{1}{|\det(A)|}.
\end{equation}
In \cite{DRS} asymptotics for the number of integral matrices of fixed determinant is derived.
Let
\begin{align*}
    V_{n, k} &= \{ M \in \mathbb{Z}^{nn} \mid \det(M) = k \}, \\
    N(r, V_{n,k}) &= \left| \{ M \in V_{n,k} \mid \|M\|_2 \leq r \} \right|,
\end{align*}
then according to \cite[example 1.6]{DRS}
\[
    N(r, V_{n,k}) \sim c_{n,k}r^{n^2 - n}.
\]
If we assume that each value occurs (roughly) equally often, then
\[
    F_{1,n,n}(r) = O(r^{n^2 - n}\ln(r)),
\]
so
\[
    \lim\limits_{r \to \infty} \dfrac{F_{1,n,n}(r)}{(2r)^{nn}} = 0
\]
and $\rho(SAT(\mathbb{Z}, n, n)) = 0$.
Since
\[
    \rho(SAT(\mathbb{Z}^{m+1}, n, n)) \leq \rho(SAT(\mathbb{Z}^m, n, n)),
\]
then $\rho(SAT(\mathbb{Z}^m, n, n)) = 0$ for arbitrary $m$.

This allows us to formulate the following conjecture.
\begin{conjecture}\label{CON_1}
$F_{1,n,n}(r) = O(r^{n^2-n}\ln(r))$ and $\rho(SAT(\mathbb{Z}^m,n,n))=0$.
\end{conjecture}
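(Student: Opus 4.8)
\noindent\textbf{Towards a proof of Conjecture \ref{CON_1}.}
The greatest divisor of a full rank matrix in $\mathbb{Z}^{nn}$ is just the absolute value of its determinant, so with $N_n(r,d)=\bigl|\{A\in B_r^{nn}\mid |\det A|=d\}\bigr|$ we have $F_{m,n,n}(r)=\sum_{d\ge 1}N_n(r,d)\,d^{-m}\le F_{1,n,n}(r)$ for every $m\ge 1$. Hence, by Theorem \ref{TH_GCD_SUMS}, the equality $\rho(SAT(\mathbb{Z}^m,n,n))=0$ follows for all $m$ as soon as $F_{1,n,n}(r)=o(r^{n^2})$, and this much is already unconditional: for each fixed $d$ one has $N_n(r,d)=O_d(r^{n^2-n})$ by \cite[example 1.6]{DRS} (replacing $\|\cdot\|_2$ by $\|\cdot\|_\infty$ does not affect the growth rate), so for any fixed $D_0\in\mathbb{Z}^+$, using $\sum_{d\ge 1}N_n(r,d)\le(2r+1)^{n^2}$ for the tail,
\[
    \limsup_{r\to\infty}\frac{F_{1,n,n}(r)}{(2r+1)^{n^2}}
    \;\le\;\limsup_{r\to\infty}\frac{\sum_{d\le D_0}N_n(r,d)}{(2r+1)^{n^2}}\;+\;\frac{1}{D_0}\;=\;\frac{1}{D_0},
\]
and letting $D_0\to\infty$ yields $F_{1,n,n}(r)=o(r^{n^2})$. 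Thus the genuine content of the conjecture is the sharp bound $F_{1,n,n}(r)=O(r^{n^2-n}\ln r)$.

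For that I would truncate and use partial summation. Hadamard's inequality gives $|\det A|\le(\sqrt n\,r)^{n}$ on $B_r^{nn}$, so $F_{1,n,n}(r)=\sum_{d=1}^{D(r)}N_n(r,d)\,d^{-1}$ with $D(r)=\lfloor n^{n/2}r^{n}\rfloor$. Writing $G_n(r,D)=\sum_{d\le D}N_n(r,d)=\bigl|\{A\in B_r^{nn}\mid 0<|\det A|\le D\}\bigr|$, the whole estimate reduces to the cumulative bound
\[
    G_n(r,D)=O_n\bigl(D\,r^{n^2-n}\bigr)\qquad(1\le D\le D(r));
\]
indeed, partial summation then gives $F_{1,n,n}(r)\ll_n G_n(r,D(r))/D(r)+\int_1^{D(r)}G_n(r,t)\,t^{-2}\,dt\ll_n r^{n^2-n}\bigl(1+\ln D(r)\bigr)\ll_n r^{n^2-n}\ln r$.

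To establish $G_n(r,D)=O_n(D\,r^{n^2-n})$ I would parametrize a full rank $A\in B_r^{nn}$ by the pair consisting of its column lattice $L=A\mathbb{Z}^{n}$ (whose index in $\mathbb{Z}^n$ is $|\det A|$) and the $GL_n(\mathbb{Z})$-matrix that expresses the columns of $A$ in a fixed basis of $L$, so that $G_n(r,D)=\sum_{[\mathbb{Z}^n:L]\le D}b(L,r)$, where $b(L,r)$ counts the ordered bases of $L$ contained in $B_r^{n}$. Reduction theory should give $b(L,r)\ll_n r^{n^2-n}/[\mathbb{Z}^n:L]^{\,n-1}$, with $b(L,r)=0$ once $\lambda_n(L)$ exceeds a constant multiple of $r$; together with the classical estimate $\sum_{e\le D}a_n(e)\ll_n D^{n}$ for the number $a_n(e)$ of index-$e$ sublattices of $\mathbb{Z}^n$ (whose Dirichlet series is $\zeta(s)\zeta(s-1)\cdots\zeta(s-n+1)$), this yields $G_n(r,D)\ll_n r^{n^2-n}\sum_{e\le D}a_n(e)\,e^{-(n-1)}\ll_n D\,r^{n^2-n}$. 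A more hands-on route avoids reduction theory: having fixed linearly independent first columns $A_1,\dots,A_{n-1}$, the determinant is the linear form $A_n\mapsto\det(A_1,\dots,A_{n-1},A_n)$, whose coefficient vector of $(n-1)$-minors has content $g=g(A_1,\dots,A_{n-1})$, so the admissible last columns lie on $O(D/g)$ parallel affine hyperplanes, each meeting $B_r^{n}$ in $O\bigl(g\,r^{n-1}/\|A_1\wedge\dots\wedge A_{n-1}\|_2\bigr)$ lattice points; summing over $A_1,\dots,A_{n-1}$, regrouped by the primitive vector $A_1\wedge\dots\wedge A_{n-1}/g\in\mathbb{Z}^n$, reduces the problem once more to a lattice-point count.

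The main obstacle, in either approach, is the uniformity in $D$ (equivalently, in the determinant). The asymptotics $N_n(r,d)\sim c_{n,d}\,r^{n^2-n}$ coming from the counting results of \cite{DRS} hold only for $d$ fixed, with constants $c_{n,d}$ of order $d^{o(1)}$ that are not controlled uniformly as $d$ grows with $r$; what the sharp bound needs is the cumulative, second-moment-type statement that $G_n(r,D)$ has order $D\,r^{n^2-n}$ throughout $1\le D\le r^{n}$, i.e.\ an effective form of the heuristic ``each determinant value is attained roughly equally often'' used above. I expect proving this uniform count to be the hard step. I note, finally, that a uniform bound even of the weaker shape $N_n(r,d)\ll_{n,\varepsilon}d^{\varepsilon}r^{n^2-n}$ for $1\le d\le r^{n}$ would suffice for the density conclusion, since $\sum_{d\le r^{n}}d^{\varepsilon-1}\ll r^{n\varepsilon}$ forces $F_{1,n,n}(r)\ll_\varepsilon r^{n^2-n+\varepsilon}=o(r^{n^2})$; upgrading this $r^{\varepsilon}$ to $\ln r$ is exactly what the cumulative estimate on $G_n(r,D)$ would accomplish.
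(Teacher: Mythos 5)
The statement you are asked to prove is a \emph{conjecture}: the paper supplies no proof, only the heuristic that, assuming each determinant value is attained roughly equally often, the counting result of \cite{DRS} would yield $F_{1,n,n}(r)=O(r^{n^2-n}\ln r)$ and hence $\rho(SAT(\mathbb{Z}^m,n,n))=0$. So there is no proof to compare you against; what matters is what your argument actually establishes.

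Your opening paragraph is correct and is in fact a genuine improvement over the paper: the truncation argument, splitting $F_{1,n,n}(r)=\sum_{d\le D_0}N_n(r,d)/d+\sum_{d>D_0}N_n(r,d)/d$, bounding the head by finitely many terms each $O_d(r^{n^2-n})$ and the tail by $(2r+1)^{n^2}/D_0$, shows $\limsup_r F_{1,n,n}(r)/(2r+1)^{n^2}\le 1/D_0$ for every $D_0$, hence $F_{1,n,n}(r)=o(r^{n^2})$ \emph{unconditionally}. Combined with the monotonicity $F_{m,n,n}\le F_{1,n,n}$ and the iff in Theorem~\ref{TH_GCD_SUMS}, this proves $\rho(SAT(\mathbb{Z}^m,n,n))=0$ for all $m\ge 1$ without the ``equidistribution of determinant values'' heuristic the paper leans on. This half of the conjecture should therefore be stated as a theorem, not left conjectural.

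The sharp bound $F_{1,n,n}(r)=O(r^{n^2-n}\ln r)$ is where the real conjecture lives, and your proposal does not close it. The reduction via Hadamard truncation and partial summation to the cumulative estimate $G_n(r,D)=O_n(Dr^{n^2-n})$ for $1\le D\le D(r)$ is sound and is the right reformulation. But the key input $b(L,r)\ll_n r^{n^2-n}/[\mathbb{Z}^n:L]^{n-1}$ (uniformly over sublattices $L$ of index up to $r^n$, with vanishing once $\lambda_n(L)\gg r$) is not a consequence of the fixed-$d$ asymptotics in \cite{DRS}; those constants $c_{n,d}$ are not controlled uniformly as $d$ grows with $r$. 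The hyperplane-counting route faces the same uniformity obstruction hidden in the implied constants. You identify this honestly, so the proposal is a correct partial result plus a well-posed but unproven reduction, rather than a proof of the full conjecture.
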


In the rest of this section we give some nontrivial estimates for upper and lower asymptotic densities of the set $SAT(\mathbb{Z}^m, k, n)$.

Denote $U_{n,k}$ the set of all $n \times k$ unimodular integer matrices.
Below, $\zeta(s) = \sum_{n=1}^{\infty} \frac{1}{n^s}$ is Riemann zeta-function.

Asymptotic density of rectangular unimodular integer matrices is derived in \cite{MRW}.
\begin{theorem}[Maze, Rosenthal, Wagner \cite{MRW}]\label{TH_UNIMOD_AD}
\mbox{}
\begin{enumerate}
    \item[(1)]
        $\rho(U_{n,k}) = \left( \prod\limits_{j = k - n + 1}^{k} \zeta(j) \right)^{-1}$ if $k > n \geq 1$,
    \item[(2)]
        $\rho(U_{n,n}) = 0$ if $n \geq 1$.
\end{enumerate}
\end{theorem}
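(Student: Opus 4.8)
The plan is to impose the unimodularity condition one prime at a time and run a sieve. The starting point is the equivalence $\gcd(M)=1 \iff \operatorname{rank}_{\mathbb{F}_p}(M)=n$ for every prime $p$, which holds because $p\mid\gcd(M)$ exactly when $p$ divides every $n\times n$ minor of $M$. Counting tuples of successively independent rows shows that the proportion of matrices in $\mathbb{F}_p^{nk}$ of full row rank is $\lambda_p:=\prod_{i=0}^{n-1}\bigl(1-p^{-(k-i)}\bigr)=\prod_{j=k-n+1}^{k}\bigl(1-p^{-j}\bigr)$, and here $1-\lambda_p\le\sum_{j=k-n+1}^{k}p^{-j}\le n\,p^{-(k-n+1)}$.

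For a finite set of primes $P$ put $U^{(P)}=\{M\in\mathbb{Z}^{nk}:\operatorname{rank}_{\mathbb{F}_p}(M)=n\ \text{for all}\ p\in P\}$; this is a union of residue classes modulo $Q=\prod_{p\in P}p$, so by the Chinese Remainder Theorem its density among residues mod $Q$ is $\prod_{p\in P}\lambda_p$, and tiling the cube $B_r^{nk}=\{M:\|M\|_\infty\le r\}$ by translates of $[0,Q)^{nk}$ gives $\rho\bigl(U^{(P)}\bigr)=\prod_{p\in P}\lambda_p$ (the limit exists). Since $U_{n,k}\subseteq U^{(P)}$, this already yields $\bar\rho(U_{n,k})\le\prod_{p\in P}\lambda_p$ for every finite $P$. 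Letting $P$ exhaust the primes: if $k=n$ the exponent $j=1$ occurs in every $\lambda_p$, so $\prod_p\lambda_p\le\prod_p(1-1/p)=0$ and $\rho(U_{n,n})=0$, which is part~(2); if $k>n$ every exponent is $\ge2$, the Euler product converges, and $\bar\rho(U_{n,k})\le\prod_p\lambda_p=\bigl(\prod_{j=k-n+1}^{k}\zeta(j)\bigr)^{-1}$.

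For the matching lower bound when $k>n$ one must show that the non-unimodular matrices surviving a finite sieve are negligible, i.e.\ $\bar\rho\bigl(U^{(P)}\setminus U_{n,k}\bigr)\to0$ as $P$ exhausts the primes. A matrix in this set has full rank (rank-deficient matrices are negligible by~\eqref{F_RANK_DENSITY}) and $\gcd(M)>1$ with every prime divisor exceeding $N:=\max P$, so $M$ is rank-deficient mod some prime $q>N$. Splitting the offending primes at $2r+1$: for $N<q\le 2r+1$, box-tiling (now with constant uniform in $q$) bounds $\#\{M\in B_r^{nk}:\operatorname{rank}_{\mathbb{F}_q}(M)<n\}$ by $O_{n,k}\bigl(q^{-(k-n+1)}(2r+1)^{nk}\bigr)$, and summing over such $q$ uses $\sum_{q>N}q^{-(k-n+1)}\to0$ (since $k-n+1\ge2$), contributing at most $\varepsilon(N)(2r+1)^{nk}$ with $\varepsilon(N)\to0$. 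For larger primes, a full-rank $M$ divisible (in the $\gcd$ sense) by any prime $>2r+1$ has $\gcd(M)>2r+1$, so the count is at most $\#\{M\in B_r^{nk}:\text{full rank},\ \gcd(M)>2r+1\}=\sum_{g>2r+1}\#\{M:\gcd(M)=g\}$; since $\gcd(M)=[\mathbb{Z}^n:\langle\text{columns of }M\rangle]$ divides a nonzero $n\times n$ minor, which is $\le n!\,r^n$ by Hadamard, this is bounded by $\sum_{[\mathbb{Z}^n:\Lambda]>2r+1}|\Lambda\cap B_r^n|^{\,k}$, the sum over sublattices $\Lambda$ of $\mathbb{Z}^n$ of index exceeding $2r+1$. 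Once this last quantity is shown to be $o\bigl((2r+1)^{nk}\bigr)$, we get $\underline\rho(U_{n,k})\ge\prod_{p\in P}\lambda_p-\varepsilon(N)$, and letting $P$ grow gives $\underline\rho(U_{n,k})\ge\bigl(\prod_{j=k-n+1}^{k}\zeta(j)\bigr)^{-1}$; together with the upper bound this proves part~(1).

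The main obstacle is exactly that last estimate, bounding $\sum_{[\mathbb{Z}^n:\Lambda]>2r+1}|\Lambda\cap B_r^n|^{\,k}$. The crude count — there are $\asymp X^n$ sublattices of index $\le X$, each meeting $B_r^n$ in only $O((2r+1)^{n-1})$ points — is far too wasteful (it gives the bound only for $k>n^2$), because a sublattice of large index typically has $O(1)$ points in $B_r^n$ while the few ``skew'' sublattices carrying many points are correspondingly rare; controlling this trade-off requires a genuine geometry-of-numbers input, namely an estimate for the number of sublattices of index in a dyadic range meeting a fixed box in at least $T$ points, organized via successive minima. It is precisely here that the hypothesis $k>n$ enters: it makes the resulting series in $g$, of the heuristic shape $\sum_g g^{n-1}(r^n/g)^k=(2r+1)^{nk}\sum_g g^{n-1-k}$, converge with a decaying tail, mirroring on the ``sublattice side'' the convergence of $\sum_p p^{-(k-n+1)}$ on the ``prime side''. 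Everything else — the local density computation, the Chinese Remainder Theorem, the box-tiling estimates, and the case $k=n$ — is routine.
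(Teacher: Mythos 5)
The paper does not prove this theorem --- it is imported from Maze, Rosenthal, and Wagner \cite{MRW}, with only the remark that their natural density agrees with the ball stratification used here --- so your attempt has to be judged on its own. The sieve framework you set up is sound, and everything you actually carry out is correct: the local factor $\lambda_p=\prod_{j=k-n+1}^{k}(1-p^{-j})$ is the exact proportion of full-row-rank matrices over $\mathbb{F}_p$; CRT plus box-tiling gives $\rho(U^{(P)})=\prod_{p\in P}\lambda_p$; monotonicity yields $\overline{\rho}(U_{n,k})\le\prod_{p\in P}\lambda_p$ for every finite $P$; for $k=n$ the factor $j=1$ drives this to $0$, which settles part~(2); and your uniform box-tiling bound for the medium primes $N<q\le 2r+1$ is valid, since the number of points of $B_r^{nk}$ in a single residue class mod $q$ is at most $\bigl(2(2r+1)/q\bigr)^{nk}$, so the constant can be taken independent of $q$.

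The genuine gap is the one you flag yourself: the lower bound in part~(1) hinges on
\[
\sum_{\substack{\Lambda\le\mathbb{Z}^n\\ [\mathbb{Z}^n:\Lambda]>2r+1}}\bigl|\Lambda\cap B_r^n\bigr|^{\,k}=o\bigl((2r+1)^{nk}\bigr),
\]
and you leave it unproved. This is not a cosmetic omission; it is exactly the step that turns the formal Euler product into an asymptotic density, and it is where $k>n$ earns its keep. Note also that as displayed the sum is literally infinite (every sublattice contributes at least $1$, since $0\in\Lambda$), so the truncation $\gcd(M)\le n!\,r^n$ coming from Hadamard --- which you mention in passing --- must be built in before one can even start. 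The repair you gesture at (Hermite normal form: $|\Lambda\cap B_r^n|\le\prod_i(2r/d_i+1)$ for diagonal $(d_1,\dots,d_n)$, together with the count $\prod_i d_i^{e_i}$ of sublattices with that diagonal, where each $e_i\le n-1\le k-2$) is the right idea and does close the gap, but it is a sketch, not an argument. Until that estimate is supplied you have proved only the inequality $\overline{\rho}(U_{n,k})\le\bigl(\prod_{j=k-n+1}^{k}\zeta(j)\bigr)^{-1}$, not the asserted equality in part~(1).
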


We note that \cite{MRW} uses a slightly different notion of natural density in $\mathbb{Z}^{nk}$, which is nevertheless equivalent to our definition of asymptotic density with respect to balls stratification.

Theorem \ref{TH_UNIMOD_AD} allows us to bound lower asymptotic density of the set $SAT(\mathbb{Z}^m, k, n)$, since every system of the form (\ref{F_SYS_IN_Zm_2}) with unimodular matrix $A$ is solvable in $\mathbb{Z}^m$.
To bound upper asymptotic density we need to know asymptotic density of solvable equations in $\mathbb{Z}^m$.

\begin{theorem}[Gilman, Myasnikov, Roman'kov \cite{GMR_nilpotent}]
\label{TH_SAZ_IN_Z_AD}
\mbox{}
\begin{enumerate}
    \item[(1)]
        $\rho(SAT(\mathbb{Z}^m, k)) = \dfrac{\zeta(k+m)}{\zeta(k)}$
            if $k \geq 2$,
    \item[(2)]
        $\rho(SAT(\mathbb{Z}^m, 1)) = 0$.
\end{enumerate}
\end{theorem}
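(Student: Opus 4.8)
The plan is to reduce the statement to an elementary divisibility count and evaluate it by stratifying over the greatest common divisor of the coefficients. An equation in $k$ variables over $\mathbb{Z}^m$ is a point $(\gamma_1,\dots,\gamma_k,\mathbf{b})\in\mathbb{Z}^{k+m}$ encoding $\gamma_1\mathbf{x_1}+\dots+\gamma_k\mathbf{x_k}=\mathbf{b}$. By Lemma~\ref{L_SYS_IN_Zm_SOLUTION} applied with $n=1$, it is solvable in $\mathbb{Z}^m$ if and only if each scalar equation $\gamma_1x_1+\dots+\gamma_kx_k=b_i$ ($i=1,\dots,m$) is solvable over $\mathbb{Z}$, that is, if and only if $d:=\gcd(\gamma_1,\dots,\gamma_k)$ divides every coordinate of $\mathbf{b}$. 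Writing $E_r$ for the set of solvable equations in the ball $B_r^{k+m}$ and discarding the single solvable equation with $\gamma=\mathbf{0}$ (namely $\mathbf{b}=\mathbf{0}$), I get
\[
    \rho_r\bigl(SAT(\mathbb{Z}^m,k)\bigr)=\sum_{d=1}^{r}\rho_r\bigl(\{\gcd(\gamma_1,\dots,\gamma_k)=d\}\bigr)\cdot\frac{\bigl|\{\mathbf{b}\in B_r^m:\,d\mid b_i\text{ for all }i\}\bigr|}{(2r+1)^m}+o(1).
\]

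For part (1), assume $k\ge 2$. Dilation by $d$ identifies the $k$-tuples in $B_r^k$ with $\gcd$ equal to $d$ with the coprime $k$-tuples in $B_{\lfloor r/d\rfloor}^k$, so the classical estimate that the number of coprime $k$-tuples in a box of side $2R+1$ is asymptotic to $(2R+1)^k/\zeta(k)$ yields $\rho_r(\{\gcd(\gamma)=d\})\to\frac{1}{\zeta(k)\,d^k}$ for each fixed $d$; likewise the $\mathbf{b}$-factor equals $\bigl((2\lfloor r/d\rfloor+1)/(2r+1)\bigr)^m$ and tends to $d^{-m}$. For $1\le d\le 2r+1$ both factors are bounded by $(2/d)^k$ and $(2/d)^m$ respectively, so each summand is dominated, uniformly in $r$, by $2^{k+m}d^{-(k+m)}$, which is summable since $k+m\ge 3$; moreover the strata with $d>r$ are empty. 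A discrete dominated-convergence argument (Tannery's theorem) then lets me pass to the limit term by term:
\[
    \rho\bigl(SAT(\mathbb{Z}^m,k)\bigr)=\sum_{d\ge 1}\frac{1}{\zeta(k)\,d^k}\cdot\frac{1}{d^m}=\frac{1}{\zeta(k)}\sum_{d\ge 1}\frac{1}{d^{k+m}}=\frac{\zeta(k+m)}{\zeta(k)}.
\]

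For part (2), $k=1$, the reduction still applies but $\gcd(\gamma_1)=|\gamma_1|$, and only the two coefficients $\gamma_1=\pm d$ in $B_r$ produce the stratum $d$, so each stratum has limiting density $0$ and one must bound the whole sum directly. Crudely,
\[
    \rho_r\bigl(SAT(\mathbb{Z}^m,1)\bigr)\le\frac{1+2\sum_{d=1}^{r}(2\lfloor r/d\rfloor+1)^m}{(2r+1)^{1+m}},
\]
and since $2\lfloor r/d\rfloor+1\le 3r/d$ for $1\le d\le r$, the inner sum is $O\bigl(r^m\zeta(m)\bigr)=O(r^m)$ when $m\ge 2$ and $O(r\log r)$ when $m=1$; dividing by $(2r+1)^{m+1}$ gives a bound tending to $0$ in either case. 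I expect the main obstacle to be the interchange of limit and infinite summation in part (1): it requires the uniform-in-$r$ domination of $\rho_r(\{\gcd(\gamma)=d\})$ by a summable sequence — equivalently, control of the coprime $k$-tuple count at the small radii $r/d$ — together with the observation that only finitely many strata (those with $d\le r$) are nonempty for each fixed $r$. Everything else is routine counting.
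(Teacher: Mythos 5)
This theorem is stated in the paper only as a citation from Gilman, Myasnikov and Roman'kov \cite{GMR_nilpotent}; no proof appears in the present paper, so there is no in-paper argument against which to compare. That said, your self-contained proof is correct and is the natural argument. The reduction via Lemma~\ref{L_SYS_IN_Zm_SOLUTION} to the divisibility condition $\gcd(\gamma)\mid b_i$ is exactly right, and the stratification by $d=\gcd(\gamma)$ with the classical density $\frac{1}{\zeta(k)d^k}$ of the $d$-th stratum (valid for $k\ge 2$) cleanly produces the Euler factor $\sum_{d\ge1}d^{-(k+m)}/\zeta(k)=\zeta(k+m)/\zeta(k)$. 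Your Tannery/dominated-convergence justification is the one nontrivial point and you handle it correctly: the term-by-term bound $\rho_r(\{\gcd(\gamma)=d\})\le\bigl(\frac{2\lfloor r/d\rfloor+1}{2r+1}\bigr)^k\le(2/d)^k$ together with the analogous bound for the $\mathbf{b}$-factor gives a summable majorant $2^{k+m}d^{-(k+m)}$ (using $k+m\ge3$), and you correctly note that for fixed $r$ only the strata with $d\le r$ are nonempty. The $k=1$ case is also handled correctly; there the stratum densities all vanish individually, so the direct upper bound by $O\!\left(r^{-1}\right)$ for $m\ge2$ and $O\!\left(\log r/r\right)$ for $m=1$ is the right move. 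The only cosmetic slip is writing the $\mathbf{b}$-factor as equal to $\bigl((2\lfloor r/d\rfloor+1)/(2r+1)\bigr)^m$ and immediately claiming it is bounded by $(2/d)^m$; that bound is what you actually need and it does hold for $d\le r$, so the argument goes through.
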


Now we are ready to prove the following theorem.

\begin{theorem}\label{TH_MAIN_BOUNDS}
The following inequalities holds
\mbox{}
\begin{enumerate}
    \item[(1)]
        $\rho(U_{n,k}) \leq
            \underline{\rho}(SAT(\mathbb{Z}^m, k, n))$
        if $k > n > 1$,
    \medskip
    \item[(2)]
        $\overline{\rho}(SAT(\mathbb{Z}^m, k, n)) \leq
            \rho(SAT(\mathbb{Z}^m, k))^{n}$
        if $k \geq n > 1$.
\end{enumerate}
\end{theorem}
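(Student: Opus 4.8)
The plan is to obtain both estimates by the sandwiching technique already used in the proof of Theorem~\ref{TH_SAT_IN_Qm_AD}: for part~(1) I would bound $SAT(\mathbb{Z}^m,k,n)$ from below by the set of systems whose coefficient matrix is unimodular, and for part~(2) I would bound it from above by the set of systems each of whose individual equations is solvable in $\mathbb{Z}^m$. In both cases the asymptotic density of the sandwiching set is read off from one of the quoted theorems after passing through a coordinate projection (for part~(1)) or a coordinate regrouping (for part~(2)) that respects the ball stratification, exactly as in the $\mathbb{Q}^m$ argument.

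For part~(1) I would set $W = \{(A\mid B)\in\mathbb{Z}^{n(k+m)} \mid A\in U_{n,k}\}$. Since a unimodular $A$ has rank $n$ and $\gcd(A)=1$, the subgroup $H_A$ is a finite-index sublattice of $\mathbb{Z}^n$ with $d(H_A)=\gcd(A)=1$, hence $H_A=\mathbb{Z}^n$; by Lemma~\ref{L_SOLVABILITY_CRITERION_Zm} every system in $W$ is solvable in $\mathbb{Z}^m$, so $W\subset SAT(\mathbb{Z}^m,k,n)$. Projecting by $\pi(A\mid B)=A$ one gets $\pi(W\cap B_r^{n(k+m)})=U_{n,k}\cap B_r^{nk}$, with each fibre lying in $B_r^{n(k+m)}$ of size $(2r+1)^{nm}$, so that $\rho_r(W)=\rho_r(U_{n,k})$. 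As $k>n$, Theorem~\ref{TH_UNIMOD_AD}(1) guarantees that $\rho(U_{n,k})$ exists, whence $\rho_r(W)\to\rho(U_{n,k})$; since $W\subset SAT(\mathbb{Z}^m,k,n)$, taking $\liminf$ yields $\rho(U_{n,k})\le\underline{\rho}(SAT(\mathbb{Z}^m,k,n))$.

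For part~(2) I would regroup coordinates so that the $i$-th equation $\gamma_{i1}\mathbf{x_1}+\dots+\gamma_{ik}\mathbf{x_k}=\mathbf{b_i}$ becomes a vector in $\mathbb{Z}^{k+m}$; this identifies $\mathbb{Z}^{n(k+m)}$ with $(\mathbb{Z}^{k+m})^n$, and since it only permutes coordinates it carries $B_r^{n(k+m)}$ onto $(B_r^{k+m})^n$. Any solution of a system is a solution of each of its equations, so under this identification $SAT(\mathbb{Z}^m,k,n)\subset SAT(\mathbb{Z}^m,k)^n$, and counting lattice points in the matching balls gives
\[
    \rho_r(SAT(\mathbb{Z}^m,k,n)) \le \rho_r(SAT(\mathbb{Z}^m,k))^{n}.
\]
Since $k\ge n>1$ forces $k\ge 2$, Theorem~\ref{TH_SAZ_IN_Z_AD}(1) gives $\rho_r(SAT(\mathbb{Z}^m,k))\to\rho(SAT(\mathbb{Z}^m,k))$, so the right-hand side converges to $\rho(SAT(\mathbb{Z}^m,k))^n$; passing to $\limsup$ on the left yields $\overline{\rho}(SAT(\mathbb{Z}^m,k,n))\le\rho(SAT(\mathbb{Z}^m,k))^n$.

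I do not expect a genuine obstacle: every ingredient is either elementary or quoted from the theorems above, and the only delicate point is bookkeeping. Because the two inclusions point in opposite directions, part~(1) controls only the lower density and part~(2) only the upper density, which is precisely why the theorem does not assert that $\rho(SAT(\mathbb{Z}^m,k,n))$ exists. Closing the resulting gap — in particular settling Conjecture~\ref{CON_1} — would instead require a genuine asymptotic analysis of the gcd-sums $F_{m,k,n}(r)$ of Theorem~\ref{TH_GCD_SUMS}, rather than the crude two-sided estimates used here.
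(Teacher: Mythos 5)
Your proof is correct and takes the same route as the paper: bound from below by unimodular-coefficient systems (Theorem~\ref{TH_UNIMOD_AD}), bound from above by systems whose equations are individually solvable (Theorem~\ref{TH_SAZ_IN_Z_AD}), and read off the densities through the same fibration/regrouping of $\mathbb{Z}^{n(k+m)}$. You merely make explicit what the paper leaves to the reader — namely the projection argument showing $\rho_r(S_1)=\rho_r(U_{n,k})$ via Lemma~\ref{L_SOLVABILITY_CRITERION_Zm} and $\gcd(A)=1\Rightarrow H_A=\mathbb{Z}^n$, and the coordinate permutation identifying $B_r^{n(k+m)}$ with $(B_r^{k+m})^n$ that gives $\rho_r(S_2)=\rho_r(SAT(\mathbb{Z}^m,k))^n$.
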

\begin{proof}
Consider the set
\[
    S_{1} = \{ (A|B) \in \mathbb{Z}^{n(k+m)} \mid A \ \text{is unimodular} \}
    \subset SAT(\mathbb{Z}^m, k, n)
\]
when $k > n > 1$.
It is easy to show that $\rho(S_{1}) = \rho(U_{n,k})$. Then
\[
    \rho(U_{n,k}) \leq \underline{\rho}(SAT(\mathbb{Z}^m, k, n)).
\]

Next consider the set $S_{2}$ of all systems from $\mathbb{Z}^m_{X,n}$, where each equation is solvable.
Clearly $SAT(\mathbb{Z}^m, k, n) \subset S_{2}$. Observe, that
\[
    \rho_{r}(S_{2}) = \rho_{r}(SAT(\mathbb{Z}^m, k))^{n}
\]
and
\[
    \rho(S_{2}) =
    \lim_{r \to \infty}\rho_{r}(S_{2}) =
    \left( \lim_{r \to \infty}\rho_{r}(SAT(\mathbb{Z}^m, k)) \right)^{n} =
    \rho(SAT(\mathbb{Z}^m, k))^{n}.
\]
Then
\[
    \overline{\rho}(SAT(\mathbb{Z}^m, k, n)) \leq \rho(SAT(\mathbb{Z}^m, k))^{n}.
\]
\end{proof}

\section*{Acknowledgements}
The author would like to thank Professor V.~A.~Roman'kov for his valuable comments and suggestions.

\end{document}